\numberwithin{equation}{section}
\renewcommand\d{\partial}
\renewcommand\a{\alpha}
\renewcommand\o{\omega}
\newcommand\R{\mathbb R}\newcommand\N{\mathbb N}\newcommand\Z{\mathbb Z}
\def\O{\Omega}
\def\k{\kappa}
\def\l{\lambda}
\def\vp{\varphi}
\def\epsilon{\varepsilon}
\def\e{\varepsilon}
\newcommand\br{\begin{rem}}
\newcommand\er{\end{rem}}
\newcommand\bp{\begin{pmatrix}}
\newcommand\ep{\end{pmatrix}}
\newcommand\be{\begin{equation}}
\newcommand\ee{\end{equation}}
\newcommand\ba{\begin{equation}\begin{aligned}}
\newcommand\ea{\end{aligned}\end{equation}}
\newcommand\nn{\nonumber}
\newcommand{\supp}{{\rm supp }}
\newcommand{\wto}{{\widetilde\Omega}}
\newcommand{\dive}{{\rm div\,}}
\newtheorem{defi}{Definition}[section]
\newtheorem{theorem}[defi]{Theorem}
\newtheorem{proposition}[defi]{Proposition}
\newtheorem{lemma}[defi]{Lemma}
\newtheorem{remark}[defi]{Remark}
\numberwithin{equation}{section}
\begin{document}

\title{On uniform estimates for Laplace equation in balls with small holes}

\author{Yong Lu \footnote{Mathematical Institute, Faculty of Mathematics and Physics, Charles University, Sokolovsk\'a 83, 186 75 Praha, Czech Republic, {\tt luyong@karlin.mff.cuni.cz}}
\thanks{The author thanks E. Feireisl, C. Prange, S. Schwarzacher and J. \v{Z}abensk\'y for interesting discussions. The author acknowledges the support of the project LL1202 in the program ERC-CZ funded by the Ministry of Education, Youth and Sports of the Czech Republic. }}

\date{}%2015.1.15

\maketitle

\begin{abstract}
In this paper, we consider the Dirichlet problem of the three-dimensional Laplace equation in the unit ball with a shrinking hole. The problem typically arises from homogenization problems in domains perforated with tiny holes. We give an almost complete description concerning the uniform $W^{1,p}$ estimates: for any $3/2<p<3$ there hold the uniform $W^{1,p}$ estimates; for any $1<p<3/2$ or $3<p<\infty $, there are counterexamples indicating that the uniform $W^{1,p}$ estimates do not hold. The results can be generalized to higher dimensions.

\end{abstract}

 \tableofcontents

\renewcommand{\refname}{References}

%%%%%%%%%%%%%%%%%%%%%%%%%%%%%%%%%%%%%%%%%%%%%%%%%%%%%%%%%%%%%%%%%%%%%%%%%%%%%%%%%%%%%%%%%%

\section{Introduction}

We consider the following Dirichlet problem of the Laplace equation with a source term of divergence form:
\ba\label{1}
-\Delta u&=\dive f,\quad &&\mbox{in}~\O_\e:=B_1\setminus \e  T,\\
u&=0,\quad &&\mbox{on} ~\d\O_\e=\d B_1\cup \e \d T.
\ea
Here $u:\O_\e \to \R$ is the \emph{unknown}, $f:\O_\e\to \R^3$ is the \emph{source function}, $\e\in (0,1)$ is a small parameter, $B_1:=B(0,1)$ is the unit ball in $\R^3$, $T$ is a closed Lipchitz subdomain of $B_1$ and is independent of $\e$.

\medskip

Our first theorem states:
\begin{theorem}\label{thm} For any $3/2<p<3$ and any $f\in L^p(\Omega_\e;\R^3)$, the unique solution $u\in W_0^{1,p}(\O_\e)$ to \eqref{1} satisfies the estimate:
\be\label{est}
\|\nabla u\|_{L^p(\Omega_\e)}\leq C \ \|f\|_{L^p(\O_\e)}
\ee
for some $C=C(p)$ independent of $\e$.

\end{theorem}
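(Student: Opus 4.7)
The plan is a decomposition-plus-rescaling argument: split $u$ into a part solving the unperforated problem on $B_1$ plus a harmonic corrector accounting for the small hole, and treat the corrector by blowing up to an exterior Dirichlet problem on $\R^3\setminus T$ for which the Newtonian decay determines the admissible range of $p$.

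\emph{Step 1 (unperforated part).} Extend $f$ by zero and let $u_1\in W^{1,p}_0(B_1)$ be defined by $-\Delta u_1=\dive \tilde f$ in $B_1$. Classical Calder\'on--Zygmund theory on the smooth ball gives
\be
\|\nabla u_1\|_{L^p(B_1)}\leq C(p)\,\|f\|_{L^p(\O_\e)}, \quad 1<p<\infty.
\ee
The difference $u_2:=u_1-u$ is then harmonic on $\O_\e$, vanishes on $\d B_1$, and satisfies $u_2=u_1$ on $\e\d T$. It remains to control $\|\nabla u_2\|_{L^p(\O_\e)}$ by $\|f\|_{L^p(\O_\e)}$.

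\emph{Step 2 (blow-up and exterior problem).} Rescaling $v(y):=u_2(\e y)$ transforms the question into a harmonic equation on $\O^{(\e)}:=\e^{-1}B_1\setminus T$ with $v|_{\d T}=u_1(\e\cdot)$ and $v|_{\e^{-1}\d B_1}=0$, together with the identity
\be
\|\nabla u_2\|_{L^p(\O_\e)}=\e^{3/p-1}\,\|\nabla v\|_{L^p(\O^{(\e)})}.
\ee
Compare $v$ with the solution $V$ of the exterior Dirichlet problem on $\R^3\setminus T$ carrying the same Dirichlet data on $\d T$ and vanishing at infinity. The three-dimensional fundamental solution yields the multipole expansion $V(y)=c(g)|y|^{-1}+O(|y|^{-2})$ and $|\nabla V(y)|\lesssim |y|^{-2}$ for large $|y|$; since $\int_{|y|\geq 1}|y|^{-2p}\,dy<\infty$ iff $p>3/2$, the norm $\|\nabla V\|_{L^p(\R^3\setminus T)}$ is finite \emph{precisely} in that range. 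A quantitative layer-potential bound
\be
\|\nabla V\|_{L^p(\R^3\setminus T)}\leq C(p,T)\,\|u_1(\e\cdot)\|_{W^{1-1/p,p}(\d T)},
\ee
combined with a scaling calculation on the trace and the classical estimate of Step~1, should cancel the factor $\e^{3/p-1}$ and give the required bound.

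\emph{Step 3 (truncation correction and duality).} The function $v-V$ is harmonic in $\O^{(\e)}$, vanishes on $\d T$, and on the outer sphere $\e^{-1}\d B_1$ satisfies $|V|\lesssim \e$; a maximum principle together with interior $L^p$-gradient estimates for harmonic functions then controls $\e^{3/p-1}\|\nabla(v-V)\|_{L^p(\O^{(\e)})}$ within the allotted budget. The upper endpoint $p<3$ is recovered by duality: the solution operator $f\mapsto\nabla u$ is (up to sign) self-adjoint on $L^2$, so $L^p$-boundedness for $p\in(3/2,2]$ follows from $L^{p'}$-boundedness with $p'\in[2,3)$.

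\emph{Main obstacle.} The technical heart of the argument is the uniform-in-$\e$ estimate of the rescaled trace $\|u_1(\e\cdot)\|_{W^{1-1/p,p}(\d T)}$ in terms of $\|\nabla u_1\|_{L^p(B_1)}$. In the range $p<3$ Sobolev embedding does not reach $L^\infty$, so a pointwise argument is impossible; instead one must combine a localized trace inequality on the thin shell around $\e\d T$ with a weighted (Hardy-type) estimate exploiting the vanishing of $u_1$ on $\d B_1$. It is exactly here that the endpoints $p=3/2$ (from the $L^p$-integrability threshold of $|y|^{-2}$ in $\R^3$) and $p=3$ (by duality) are pinned down, suggesting that the theorem's range is sharp within this scheme.
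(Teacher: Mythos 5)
Your outline is a reasonable alternative strategy, but as written it has a genuine gap at exactly the point you yourself flag as the ``main obstacle'': the uniform-in-$\e$ control of the boundary data on the small hole. Everything in Steps 2--3 hinges on bounding $\e^{3/p-1}\|u_1(\e\cdot)\|_{W^{1-1/p,p}(\d T)}$ (and, separately, the constant $c(g)$ in the multipole expansion and the claim $|V|\lesssim\e$ on $\e^{-1}\d B_1$) by $C\|f\|_{L^p(\O_\e)}$. For $3/2<p<3$ the function $u_1$ is not bounded near the origin --- it may blow up like $|x|^{-\gamma}$ for any $\gamma<3/p-1$ --- so the trace of $u_1(\e\cdot)$ on $\d T$ is generically of size $\e^{-\gamma}$, and the cancellation against $\e^{3/p-1}$ is exactly borderline. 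Making this rigorous requires a quantitative trace inequality on the $\e$-shell combined with a Hardy inequality $\||x|^{-1}u_1\|_{L^p(B_1)}\leq C\|\nabla u_1\|_{L^p(B_1)}$ (valid for $p<3$, using the zero extension of $u_1\in W_0^{1,p}(B_1)$), plus a corresponding estimate for the fractional seminorm; none of this is carried out, so the proof is incomplete precisely where the restriction $3/2<p<3$ must be earned. The duality remark in Step 3 does not repair this: the symmetric structure of $f\mapsto\nabla u$ would only transfer a bound already proved for $p'\in[2,3)$, and the unproved trace/Hardy estimate is needed for every $p<3$, so duality removes nothing. The claim $|V|\lesssim\e$ on the outer sphere likewise presupposes the same missing control of the data, and the passage from the maximum principle to an $L^p$ bound for $\nabla(v-V)$ over a domain of volume $\e^{-3}$ is only sketched.

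For comparison, the paper avoids this delicate boundary-data estimate altogether. It rescales to $\widetilde\O_\e=B_{1/\e}\setminus T$, splits the solution with a fixed cut-off into a piece supported in the bounded Lipschitz domain $B_2\setminus T$ (handled by the Jerison--Kenig estimate on a fixed domain) and a piece solving a Dirichlet problem in the large ball $B_{1/\e}$, for which explicit Green's-function estimates (Lemmas \ref{lem:lap-Be0} and \ref{lem:lap-Be}, where the restrictions $p<d$ and $p>d'$ enter) give bounds with constants independent of $\e$; the resulting inequality $\|\nabla v\|_{L^p}\leq C(\|v\|_{L^p(B_2\setminus T)}+\|g\|_{L^p})$ is then upgraded to the theorem by a compactness/contradiction argument, in which the normalized solutions converge to a decaying harmonic function in $\R^3\setminus T$ vanishing on $\d T$, hence zero. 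Your blow-up-to-exterior-problem idea is close in spirit to the limit object in that contradiction argument, but to turn your scheme into a proof you would have to supply the quantitative trace/Hardy step described above (and justify the exterior layer-potential bound on the Lipschitz set $T$ in the stated range of $p$), whereas the paper's route replaces all of this by the soft Liouville-type argument.
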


We give a remark concerning the well-posedness of \eqref{1} for any fixed $\e$. We refer to Theorem 0.5 and Theorem 1.1 in \cite{JK} for more details and the proof.
\begin{remark}\label{rem:exi}

For any Lipchitz domain $\O\subset \R^d,~d\geq 3$, there exits $p_1>3$ such that for any $p_1'<p<p_1$ and any $h\in W^{-1,p}(\O)$, the Dirichlet problem of the Laplace equation
\ba
-\Delta w&= h,\quad &&\mbox{in}~\O,\\
w&=0,\quad &&\mbox{on} ~\d\O\nn
\ea
 is well-posed in $W_0^{1,p}(\O)$ and the solution $w$ satisfies
\be\label{est-classical}
\|w\|_{W^{1,p}_0(\Omega)}\leq C(p,d,\O) \ \|h\|_{W^{-1,p}(\Omega)}.
\ee
Moreover, if the domain $\O$ is $C^1$, one can take $p_1=\infty$.

\medskip

Here and in the sequel, we use the notation $p'$ to denote the Lebesgue conjugate component of $p\in [1,\infty]$ such that $1/p'+1/p=1$;  we use $W^{-1,p}(\O)$ to denote the dual space of $W_0^{1,p'}(\O)$ for any $1<p<\infty$ and any domain $\O\subset \R^d$. The definition of the norm is classical:
$$
\|u\|_{W^{-1,p}(\O)}:=\sup_{\phi \in C_c^\infty(\O),\,\|\phi\|_{W^{1,p'}=1}}|\langle u,\phi \rangle|.
$$

\end{remark}

Our concern is the estimate for the constant $C(p,d,\O)$ appearing in \eqref{est-classical}.  If $p=2$, one has $C(2,d,\O)$=1. For $p\neq 2,$ the constant $C(p,d,\O)$ depends on the Lipchitz character of the domain $\O$. For our case, the Lipshitz norm of $\Omega_\e$ is of order $1/\epsilon$ which is unbounded when $\e\to 0$. Thus one cannot apply the classical results as in Remark \ref{rem:exi} to obtain the uniform estimate \eqref{est}.

\medskip

Our second theorem shows that the choice range of $p$ in Theorem \ref{thm} is critical:
\begin{theorem}\label{thm1}

\begin{itemize}
\item There exits $f\in C^\infty(\overline B_1;\R^3)$ such that for any $3<p<\infty$, if there exist solutions $u_\e\in W^{1,p}_0(\O_\e)$ to \eqref{1} for all $0<\e\ll 1$, then
\be\label{est1}
\liminf_{\e \to 0}\|\nabla u_\e\|_{L^p(\Omega_\e)} =\infty.
\ee

\item Suppose furthermore that $T$ has $C^1$ boundary. Then for any $1<p<3/2$ and any $0<\e<1$, there exists $f_\e\in L^p(\Omega_\e;\R^3)$ satisfying $\|f_\e\|_{L^p(\Omega_\e)}=1$ such that the unique solution $u_\e\in W^{1,p}_0(\Omega_\e)$ to \eqref{1} with source function $f_\e$ satisfies
\be\label{est11}
\liminf_{\e \to 0}\|\nabla u_\e\|_{L^p(\Omega_\e)} =\infty.
\ee
\end{itemize}
\end{theorem}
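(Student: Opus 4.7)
The plan is to handle the upper range $p>3$ directly by a compactness/Morrey argument and then deduce the lower range $1<p<3/2$ by duality.

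\textbf{Part (i), $p>3$.} I would fix a single smooth $f\in C^\infty(\overline{B_1};\mathbb{R}^3)$ whose limit Dirichlet solution $u_0$ on the whole ball $B_1$ satisfies $u_0(0)\neq 0$; for instance $f(x)=-x$ gives $u_0(x)=(|x|^2-1)/2$, so $u_0(0)=-1/2$. Argue by contradiction: suppose that for some $p>3$ there is a subsequence along which $\|\nabla u_\varepsilon\|_{L^p(\Omega_\varepsilon)}$ stays bounded. Since $u_\varepsilon$ has zero trace on $\varepsilon\partial T$, its extension $\tilde u_\varepsilon$ by $0$ across the hole lies in $W^{1,p}_0(B_1)$ with the same gradient norm, so by Morrey's embedding $W^{1,p}\hookrightarrow C^{0,1-3/p}$ and compactness of the embedding into $C^{0,\alpha'}$ for $\alpha'<1-3/p$, one may extract $\tilde u_\varepsilon\to u^\ast$ weakly in $W^{1,p}_0(B_1)$ and uniformly on $\overline{B_1}$. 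Any $\varphi\in C_c^\infty(B_1\setminus\{0\})$ is admissible in the weak formulation on $\Omega_\varepsilon$ once $\varepsilon$ is small enough that $\varepsilon T$ avoids $\operatorname{supp}\varphi$, so passing to the limit gives $-\Delta u^\ast=\dive f$ on $B_1\setminus\{0\}$. Consequently $u^\ast-u_0$ is a bounded, continuous harmonic function on $B_1\setminus\{0\}$ vanishing on $\partial B_1$; the removable-singularity theorem for bounded harmonic functions forces $u^\ast\equiv u_0$. But picking $x_\varepsilon\in\varepsilon\partial T$ one has $u_\varepsilon(x_\varepsilon)=0$ and $x_\varepsilon\to 0$, so uniform convergence yields $u^\ast(0)=0$, contradicting $u_0(0)\neq 0$.

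\textbf{Part (ii), $1<p<3/2$, by duality.} Since $T$ and $\partial B_1$ are $C^1$, every $\Omega_\varepsilon$ is a $C^1$ domain, and by Remark~\ref{rem:exi} the solution operator $S_\varepsilon:L^p(\Omega_\varepsilon;\mathbb{R}^3)\to L^p(\Omega_\varepsilon;\mathbb{R}^{3\times 3})$, $f\mapsto\nabla u$, is well defined and bounded for every $p\in(1,\infty)$ at each fixed $\varepsilon$. A standard adjoint computation (substitute the primal solution for one Laplace problem as a test function in the weak formulation of the dual one and conversely) gives
\[
\|S_\varepsilon\|_{L^p\to L^p}=\|S_\varepsilon\|_{L^{p'}\to L^{p'}}.
\]
Applied at the exponent $p'\in(3,\infty)$, part (i) with the fixed $f$ above gives $\|S_\varepsilon f\|_{L^{p'}(\Omega_\varepsilon)}\to\infty$, and since $\|f\|_{L^{p'}(\Omega_\varepsilon)}$ is uniformly bounded one concludes $\|S_\varepsilon\|_{L^{p'}\to L^{p'}}\to\infty$. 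Duality transfers this to $\|S_\varepsilon\|_{L^p\to L^p}\to\infty$, and choosing $f_\varepsilon$ of unit $L^p(\Omega_\varepsilon)$-norm realising at least half of the operator norm produces \eqref{est11}.

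\textbf{Main obstacle.} The delicate step is the identification $u^\ast=u_0$ across the limit point $\{0\}$: for $p>3$ a single point has positive $W^{1,p}$-capacity, so the class $C_c^\infty(B_1\setminus\{0\})$ does not, by itself, characterise a distribution on the whole ball. The plan sidesteps this not at the distributional level but at the PDE level, using the continuity of $u^\ast$ supplied by Morrey together with the classical removable-singularity theorem for bounded harmonic functions. This is precisely why the threshold $p=3$ is the correct one: it is the exponent at which Hölder continuity (hence prescription of the pointwise value at $0$) becomes available.
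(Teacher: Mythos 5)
Your argument is correct and follows the same overall skeleton as the paper's: for $p>3$, contradiction plus zero extension plus Morrey compactness plus pinning down the value of the limit at the origin; for $1<p<3/2$, duality from the first part. The differences are local. Where the paper proves the general criterion of Theorem \ref{thm11} --- showing by an explicit cutoff argument (which works because a point has zero $W^{1,p'}$-capacity, $p'<3/2$) that the limit solves \eqref{eq-tu} on all of $B_1$, and then evaluating $\tilde u(0)$ through the Green's function to contradict \eqref{f-ex} --- you specialize to $f(x)=-x$, whose ball solution $u_0(x)=(|x|^2-1)/2$ is explicit, and identify the limit with $u_0$ via the removable-singularity theorem for bounded harmonic functions together with the maximum principle. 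Your route is more elementary and avoids the Green's function entirely, but it yields only the single counterexample needed for Theorem \ref{thm1}, whereas the paper's route gives the stronger Theorem \ref{thm11} (of which the first part of Theorem \ref{thm1} is a corollary). For the lower range, your operator-norm identity $\|S_\e\|_{L^p\to L^p}=\|S_\e\|_{L^{p'}\to L^{p'}}$ combined with near-maximizers is the same computation the paper performs concretely by taking $f_\e=|\nabla v_\e|^{p'-2}\nabla v_\e/\|\nabla v_\e\|_{L^{p'}(\O_\e)}^{p'/p}$, which realizes the dual pairing exactly; both rest on the symmetry relation obtained by cross-testing the two Dirichlet problems, and both need the $C^1$ regularity of $\d T$ only to have well-posedness at every fixed $\e$ for all exponents. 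One side remark: your ``main obstacle'' paragraph slightly misidentifies the issue. The obstruction to extending the weak formulation across $\{0\}$ is governed by the $W^{1,p'}$-capacity of a point (which vanishes since $p'<3/2<3$), not the $W^{1,p}$-capacity; this is precisely why the paper's distributional extension (Proposition \ref{prop:tu}) succeeds, with the exponent restriction entering through the decay rates $n^{1-3/p'}$ and $n^{-3/p'}$ of the cutoff errors. This inaccuracy is confined to your commentary and does not affect the validity of the removable-singularity route you actually use.
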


In fact, we will prove the following more general result and the first part of Theorem \ref{thm1} is a corollary of it.
\begin{theorem}\label{thm11}
 Let $f\in L^p(B_1;\R^3)$ for some $p>3$ such that $\dive f \in L^{q}(B_1)$ for some $q>3/2$. Suppose that $f$ is independent of $\e$ and satisfies
\be\label{f-ex}
\int_{B_1} \left(\frac{1}{|y|}-1\right) \dive f(y)dy\neq 0.
\ee
Then if there exist solutions $u_\e\in W^{1,p}_0(\O_\e)$ to \eqref{1} for all $0<\e\ll 1$, there hods
\be\label{est12}
\liminf_{\e\to 0}\|\nabla u_\e\|_{L^p(\Omega_\e)} =\infty.
\ee
\end{theorem}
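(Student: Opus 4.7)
The plan is a compactness-and-identification argument by contradiction. Assume the conclusion fails along some sequence $\e_n \to 0$, so that the norms $\|\nabla u_{\e_n}\|_{L^p(\Omega_{\e_n})}$ remain bounded. Extending each $u_{\e_n}$ by zero to $\tilde u_{\e_n} \in W^{1,p}_0(B_1)$ preserves both the norm and the homogeneous Dirichlet data, and Poincar\'e's inequality on $B_1$ gives a uniform bound for $\tilde u_{\e_n}$ in $W^{1,p}_0(B_1)$. Because $p > 3$, the Morrey embedding provides, along a further subsequence, the simultaneous convergences $\tilde u_{\e_n} \rightharpoonup u_0$ weakly in $W^{1,p}_0(B_1)$ and $\tilde u_{\e_n} \to u_0$ uniformly on $\overline{B_1}$, with $u_0 \in C^{0,1-3/p}(\overline{B_1})$.

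Next I identify $u_0$. Fixing any $x_T \in \partial T$, one has $u_{\e_n}(\e_n x_T) = 0$ and $\e_n x_T \to 0$, so the uniform convergence yields $u_0(0) = 0$. For any $\phi \in C_c^\infty(B_1 \setminus \{0\})$, the dilated set $\e_n T$ shrinks to the origin, hence eventually $\supp \phi \subset \Omega_{\e_n}$; the weak formulation of \eqref{1} then reads $\int_{B_1} \nabla \tilde u_{\e_n} \cdot \nabla \phi \, dx = -\int_{B_1} f \cdot \nabla \phi\, dx$, and passing to the limit by weak convergence of the gradients gives $-\Delta u_0 = \dive f$ in $\D'(B_1 \setminus \{0\})$. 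Let $v \in W^{1,p}_0(B_1)$ denote the unique solution to $-\Delta v = \dive f$ on the whole unit ball, which exists and enjoys $v \in C^{0,1-3/p}(\overline{B_1})$ by Remark \ref{rem:exi} (since $\partial B_1$ is smooth) together with the Morrey embedding. Then $w := u_0 - v$ is harmonic in $B_1 \setminus \{0\}$ and continuous on $\overline{B_1}$, hence bounded near $0$; the classical removable-singularity theorem for bounded harmonic functions extends $w$ harmonically through $0$, and $w|_{\partial B_1}=0$ forces $w \equiv 0$, i.e.\ $u_0 \equiv v$ on $B_1$.

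Finally, I compute $v(0)$ via the Green's function of the unit ball evaluated at the origin, $G(0,y) = \frac{1}{4\pi}(1/|y|-1)$. The hypothesis $\dive f \in L^q(B_1)$ with $q > 3/2$ gives $q' < 3$ and hence $G(0,\cdot) \in L^{q'}(B_1)$, so the Green's representation
\[
v(0) = \int_{B_1} G(0,y)\,\dive f(y)\,dy = \frac{1}{4\pi}\int_{B_1}\Big(\frac{1}{|y|}-1\Big)\dive f(y)\,dy
\]
is a legitimate, absolutely convergent integral, and by assumption \eqref{f-ex} it is nonzero. This contradicts $u_0(0)=0$, yielding \eqref{est12}. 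The most delicate point in the scheme is the identification step: the limit equation lives only on the \emph{punctured} ball $B_1\setminus\{0\}$, so one needs enough a priori regularity of $u_0$ to treat $\{0\}$ as a removable singularity of $u_0-v$; the Morrey embedding — available precisely because $p>3$ — supplies exactly the boundedness of $w$ near $0$ that is needed, while $q>3/2$ is what makes the contradicting integral at the origin well-defined.
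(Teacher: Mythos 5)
Your proposal is correct, and it follows the paper's overall skeleton: argue by contradiction along a subsequence with bounded norms, extend by zero to $B_1$, use $p>3$ and Morrey/compact embedding to get a H\"older-continuous limit, show the limit vanishes at the origin, and contradict \eqref{f-ex} by evaluating the Green representation at $x=0$ (well defined since $q>3/2$, i.e.\ $q'<3$). The one genuine divergence is the identification step. The paper (Proposition \ref{prop:tu}) proves that the weak limit solves $-\Delta \tilde u=\dive f$ on \emph{all} of $B_1$ by a quantitative capacity-type argument: test with cutoffs $\phi_n$ vanishing on $B_{1/n}$ and use $\|\nabla\phi_n\|_{L^{p'}}\leq Cn^{1-3/p'}\to 0$, which is exactly where $p>3$ (hence $p'<3/2$) enters; the Green's function is then applied directly to the limit. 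You instead obtain the equation only on the punctured ball $B_1\setminus\{0\}$, introduce the genuine solution $v\in W^{1,p}_0(B_1)$ on the full ball, and remove the singularity of the harmonic difference $w=u_0-v$ via Weyl's lemma, boundedness of $w$ near $0$ (supplied by the Morrey embedding, your use of $p>3$), the classical removable-singularity theorem, and the maximum principle with zero boundary values; for the last step you are implicitly using that H\"older-continuous elements of $W^{1,p}_0(B_1)$ vanish pointwise on $\partial B_1$, which is true and worth stating. Both routes are sound and of comparable length: the paper's is self-contained and purely variational, yours trades the explicit cutoff computation for two standard qualitative facts about harmonic functions, and has the small advantage that your argument for $u_0(0)=0$ (via boundary points $\e_n x_T\to 0$ of $\e_n T$) does not need $0\in T$, which the paper's phrasing ``$\e_k T\ni 0$'' tacitly assumes. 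The final evaluation $v(0)=\frac{1}{4\pi}\int_{B_1}(1/|y|-1)\dive f\,dy$ and the contradiction with \eqref{f-ex} coincide with the paper's conclusion.
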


Concerning the well-posedness of \eqref{1} in $W_0^{1,p}(\O_\e)$ with $3<p<\infty$ or $1<p<3/2$, we refer to Remark \ref{rem:exi}.

\subsection{Motivation}

The Dirichlet problem in the unit ball with a small hole arises typically in the homogenization problems in domains  perforated with very tiny holes (obstacles) for which the diameters are much smaller than their mutual distances.

The homogenization of elliptic systems and the homogenization problems in the framework of fluid mechanics have gained a lot interest: J\"{a}ger and Mikeli\'{c} \cite{JM} for the Laplace equation, Allaire \cite{ALL-NS1} and \cite{ALL-NS2} for the Stokes and stationary incompressible Navier-Stokes equations, Mikeli\'{c} \cite{Mik} for  the incompressible evolutionary Navier-Stokes equations, Masmoudi \cite{Mas-Hom} for the compressible Navier-Stokes system, Feireisl, Novotn\'y and Takahashi \cite{FNT-Hom} for the complete Navier-Stokes-Fourier system and recently Feireisl and Lu \cite{FL1} for the stationary compressible Navier-Stokes system.

Allaire in \cite{ALL-NS1,ALL-NS2} showed that the homogenization process crucially depends on the size of the holes. Specifically, for three-dimensional Stokes and stationary incompressible Navier-Stokes equations in a domain perforated with holes of diameter $O(\e^{\alpha})$, where $\e$ is the size of their mutual distances, Allaire showed that when $\alpha<3$, the behavior of the limit fluid is governed by the classical Darcy's law; when $\alpha=3$, in the limit it yields Brinkman law; when $\alpha>3$, the equations do not change in the homogenization process and the limit homogenized system is the same system of Stokes or Navier-Stokes equations.

A key point of Allaire's argument is the construction of the \emph{restriction operator} $R_\e$ which is a linear mapping from $W_0^{1,2}(\Omega)$ where $\O$ is the domain without holes to $W_0^{1,2}(\Omega_\e)$ where $\Omega_\e$ is the domain with holes. In the construction of $R_\e$ (see Section 2.2 in \cite{ALL-NS1}), there arises the Dirichlet problem of the Stokes equation in the neighborhood of any single hole. Since the holes are of diameter $O(\e^{\alpha})$ and their mutual distances are of size $\e$, then after rescalling by $\e$, there comes the Dirichlet problem of the Stokes equation in a domain of the form $B_1\setminus \e^{\a-1}T.$ The operator norm of $R_\e$ depends on the $W^{1,2}$ estimate of the Dirichlet Stokes problem in $B_1\setminus \e^{\a-1}T.$

 In the framework of $L^2$, the uniform $W^{1,2}$ estimate for elliptic equations is rather direct with the estimate constant to be $1$. However, the $L^p$ framework and $W^{1,p}$ estimate for general $p$ are needed in the homogenization of more complicated systems, such as the
evolutionary Navier-Stokes equations in \cite{Mik}, the compressible Navier-Stokes system in \cite{Mas-Hom}, and the complete Navier-Stokes-Fourier system in \cite{FNT-Hom}. In the framework of $L^p$, the estimate constant usually depends on the domain, for example, the Lipchitz character of the domain.

However, it is considered only the case $\alpha=1$ in \cite{Mik}, \cite{Mas-Hom} and \cite{FNT-Hom}, meaning that the size of holes is proportional to their mutual distances. In this case, the domain $B_1\setminus \e^{\a-1}T$=$B_1\setminus T$ is independent of $\e$. Consequently, the $W^{1,p}$ estimates can be obtained by applying the classical results, see for instance  \cite{JK} for the Laplace equation, \cite{CP} for elliptic equations in divergence form with variable coefficients and \cite{BS} for the Stokes equation.

To extend the study of homogenization problems for evolutionary Navier-Stokes equations with different size of holes, it is motivated to study the Laplace and Stokes equations in domains of the type $B_1\setminus \e T$.

\subsection{Generalization to higher dimensions}

Our results can be generalized to higher dimensions. In particular, if $T$ is a closed $C^1$ subdomain of $B_1:=\{x\in\R^d:|x|<1\},\,d\geq 4,$ and $T$ is independent of $\e$, the Dirichlet problem of the Laplace equation
\ba\label{1-d}
-\Delta u&=\dive f,\quad &&\mbox{in}~\O_\e:=B_1\setminus \e  T,\\
u&=0,\quad &&\mbox{on} ~\d\O_\e=\d B_1\cup \e \d T
\ea
admits a unique solution $u\in W^{1,p}_0(\O_\e)$ provided $f\in L^p(\O_\e)$. This is true for any $0<\e<1$ and any $1<p<\infty$. Moreover, we have the following results concerning the uniform $W^{1,p}$ estimates:
\begin{theorem}\label{thm-d} For any $d'<p<d$ and any $f\in L^p(\Omega_\e;\R^d)$,  the unique solution $u\in W_{0}^{1,p}(\O_\e)$ to \eqref{1-d} satisfies the estimate:
\be\label{est-d}
\|\nabla u\|_{L^p(\Omega_\e)}\leq C \ \|f\|_{L^p(\O_\e)}\nn
\ee
for some $C=C(p,d)$ independent of $\e$.

\end{theorem}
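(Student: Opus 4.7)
The proof plan is to mirror the approach for Theorem \ref{thm}, adapting the exponent-dependent estimates to ambient dimension $d\ge 4$. As a first step, extend $f$ by zero from $\O_\e$ to $B_1$ and use the classical $W^{1,p}$ theory on the smooth domain $B_1$ (Remark \ref{rem:exi}) to produce $v\in W^{1,p}_0(B_1)$ solving $-\Delta v=\dive f$ in $B_1$ with
\[
\|\nabla v\|_{L^p(B_1)}\le C(p,d)\,\|f\|_{L^p(B_1)}\qquad(1<p<\infty),
\]
a bound independent of $\e$. Writing $u=v-w$ on $\O_\e$, the corrector $w$ is harmonic in $\O_\e$ with $w|_{\d B_1}=0$ and $w|_{\e\d T}=v|_{\e\d T}$, and the whole theorem reduces to proving
\[
\|\nabla w\|_{L^p(\O_\e)}\le C\,\|\nabla v\|_{L^p(B_1)}\qquad(d'<p<d),
\]
uniformly in $\e$.

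The main step is the analysis of $w$ near the hole. Let $\phi$ be the Newtonian capacity potential of $T$, i.e.\ the unique harmonic function in $\R^d\setminus T$ with $\phi|_{\d T}=1$ and $\phi(y)\to 0$ as $|y|\to\infty$, so that $|\nabla\phi(y)|\le C|y|^{1-d}$. For the rescaled profile $\phi_\e(x):=\phi(x/\e)$ a direct change of variables yields
\[
\|\nabla\phi_\e\|_{L^p(\O_\e)}^{p}\simeq\e^{\,d-p}\qquad\text{as }\e\to 0,
\]
so this norm remains bounded exactly when $p\le d$: this pins down the upper endpoint. Cutting $\phi_\e$ off away from $\d B_1$ with a smooth function $\eta$ independent of $\e$, I would decompose $w=m_\e\,\eta\phi_\e+w_1$, with $m_\e$ a mean of $v$ over a shell of the form $(2\e T)\setminus(\e T)$ around the hole. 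The leading term captures the constant mode of $v|_{\e\d T}$: the Sobolev embedding $W^{1,p}_0(B_1)\hookrightarrow L^{p^*}(B_1)$, valid because $p<d$, together with the scaling $|(2\e T)\setminus(\e T)|\simeq\e^d$, gives $|m_\e|\le C\e^{-d/p^*}\|\nabla v\|_{L^p(B_1)}$; when combined with the bound on $\nabla\phi_\e$ this is exactly scale-critical and produces a uniform estimate. The residual $w_1$ is harmonic, has a smooth cut-off error on $\d B_1$, and $w_1|_{\e\d T}=v-m_\e$ is of mean zero; rescaling to the fixed exterior domain $\R^d\setminus T$ (intersected with a large ball) and invoking a trace-plus-Poincar\'e inequality on the shell, with constants independent of $\e$, one obtains the uniform control of $\|\nabla w_1\|_{L^p(\O_\e)}$.

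The lower endpoint $p=d'$ is then recovered by duality. For $d'<p<2$ and any $g\in L^{p'}(\O_\e;\R^d)$, solve the auxiliary problem $-\Delta\psi=\dive g$ in $\O_\e$ with $\psi|_{\d\O_\e}=0$; since $p'\in(2,d)$, the estimate just established applies to $\psi$ and yields $\|\nabla\psi\|_{L^{p'}(\O_\e)}\le C\|g\|_{L^{p'}(\O_\e)}$. Integration by parts gives
\[
\int_{\O_\e}\nabla u\cdot g\,dx=-\int_{\O_\e}f\cdot\nabla\psi\,dx,
\]
and taking the supremum over $\|g\|_{L^{p'}}\le 1$ delivers $\|\nabla u\|_{L^p(\O_\e)}\le C\,\|f\|_{L^p(\O_\e)}$.

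The hardest part is the corrector step: once $p<d$ the pointwise value $v(0)$ is meaningless, so the averaged quantity $m_\e$ must be chosen on an appropriate $\e$-scale and the trace of $v-m_\e$ on $\e\d T$ controlled with an $\e$-independent constant after rescaling to $\R^d\setminus T$. The balance between the $\e^{(d-p)/p}$ growth of $\nabla\phi_\e$ (which forces $p\le d$) and the use of the Sobolev exponent $p^*=dp/(d-p)$ (which forces $p>d'$) is precisely what selects the admissible range $d'<p<d$.
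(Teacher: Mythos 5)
Your plan is genuinely different from the paper's proof (which rescales to $B_{1/\e}\setminus T$, splits the solution with a cut-off near the hole, treats the near-hole part by classical estimates in the fixed Lipschitz domain $B_2\setminus T$, treats the far part with the explicit Green's function of the ball $B_{1/\e}$ as in Lemma \ref{lem:lap-Be0} and Lemma \ref{lem:lap-Be}, and finally removes the lower-order term $\|v\|_{L^p(B_2\setminus T)}$ in Proposition \ref{prop:v} by a compactness/contradiction argument). Your bookkeeping for the leading capacity term is correct: $|m_\e|\lesssim \e^{1-d/p}\|\nabla v\|_{L^p(B_1)}$ and $\|\nabla\phi_\e\|_{L^p(\O_\e)}\simeq \e^{d/p-1}$, so that product is scale-critical exactly on $d'<p<d$ (note, though, that $p>d'$ is needed already for $\int_{\R^d\setminus T}|\nabla\phi|^p<\infty$ and for the exterior extension's gradient to lie in $L^p$ at infinity; the Sobolev embedding only imposes $p<d$).

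The genuine gap is the residual $w_1$. The phrase ``rescaling to the fixed exterior domain $\R^d\setminus T$ (intersected with a large ball) and invoking a trace-plus-Poincar\'e inequality on the shell, with constants independent of $\e$'' conceals exactly the point at issue: after rescaling, the relevant domain is $B_{1/\e}\setminus T$, which is not fixed, and a uniform-in-$\e$ $W^{1,p}$ bound for the Dirichlet problem there is precisely the content of Theorem \ref{thm2}; you cannot simply invoke the classical Lipschitz-domain estimate, whose constant is not known to be uniform as the outer ball expands. If instead you extend $v-m_\e$ harmonically to the true exterior domain $\R^d\setminus T$ with decay (legitimate, and it gives $\|\nabla W\|_{L^p}\lesssim \e^{1-d/p}\|\nabla v\|_{L^p}$ for $p>d'$) and cut off near $\d B_1$, then $w_1$ minus this construction, together with the cut-off error $m_\e\Delta(\eta\phi_\e)$, solves a Poisson problem in $\O_\e$ with zero Dirichlet data and a small right-hand side supported in a fixed annulus --- and estimating \emph{that} correction uniformly in $W^{1,p}(\O_\e)$ is again the original problem, so the argument as written is circular at its core step. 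To close it you would need either an iteration of the capacity-potential/exterior-extension correction gaining a factor $\e^{d-2}$ at each step (summing a geometric series), or a substitute for the paper's Green's-function estimates in $B_{1/\e}$ for compactly supported and divergence-form sources. Until one of these is supplied, the key uniform estimate is asserted rather than proved. (The duality step for $d'<p<2$ is correct but superfluous: once the direct construction works, nothing in it requires $p\geq 2$ when $T$ is $C^1$.)
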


\begin{theorem}\label{thm1-d}

\begin{itemize}
\item There exits $f\in C^\infty(\overline B_1;\R^d)$ such that the unique solution $u_\e$ to \eqref{1-d} satisfies
\be\label{est1-d}
\liminf_{\e \to 0}\|\nabla u_\e\|_{L^p(\Omega_\e)} =\infty,\quad \mbox{for any $d<p<\infty$}.\nn
\ee

\item For any $1<p<d'$ and any $0<\e<1$, there exists $f_\e\in L^p(\Omega_\e;\R^d)$ satisfying $\|f_\e\|_{L^p(\Omega_\e)}=1$ such that the unique solution $u_\e\in W^{1,p}_0(\Omega_\e)$ to \eqref{1-d} with source function $f_\e$ satisfies
\be\label{est11-d}
\liminf_{\e \to 0}\|\nabla u_\e\|_{L^p(\Omega_\e)} =\infty.\nn
\ee
\end{itemize}
\end{theorem}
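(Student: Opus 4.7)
The proof strategy parallels that of Theorems~\ref{thm1} and~\ref{thm11}, with the three-dimensional fundamental solution $1/|y|$ replaced by its $d$-dimensional analogue $|y|^{2-d}$. For the first part, I would start from a fixed $f\in C^\infty(\overline B_1;\R^d)$ satisfying the $d$-dimensional analogue of \eqref{f-ex}, namely
\[
\int_{B_1}\bigl(|y|^{2-d}-1\bigr)\dive f(y)\,dy\neq 0.
\]
Let $u_0\in W^{1,p}_0(B_1)$ solve $-\Delta u_0=\dive f$ in $B_1$ with zero boundary data. The Green's function of $-\Delta$ on $B_1$ evaluated at the origin is proportional to $|y|^{2-d}-1$, so $u_0(0)$ is a non-zero multiple of the above integral; smoothness of $f$ is more than enough to make $u_0$ continuous at $0$.

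Decompose $u_\e=u_0-w_\e$, where $w_\e$ is harmonic in $\O_\e$, vanishes on $\d B_1$, and equals $u_0$ on $\e\d T$. I would approximate $w_\e$ by $u_0(0)\,v_\e$, with $v_\e$ the ``capacity potential'' solving $\Delta v_\e=0$ in $\O_\e$, $v_\e=1$ on $\e\d T$ and $v_\e=0$ on $\d B_1$; the maximum principle together with the continuity of $u_0$ at $0$ makes the error negligible in $L^p$. A blow-up at scale $\e$ identifies $v_\e$ on the bulk annulus $\e\ll|x|\ll 1$ with the rescaled Newtonian potential, giving $v_\e(x)\sim \gamma_T\,\e^{d-2}(|x|^{2-d}-1)$, where $\gamma_T>0$ is proportional to the harmonic capacity of $T$. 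Hence $|\nabla v_\e(x)|\sim \e^{d-2}|x|^{1-d}$ on the same annulus, and a direct computation,
\[
\|\nabla v_\e\|_{L^p(\O_\e)}^p \;\sim\; \e^{p(d-2)}\int_\e^1 r^{d-1-p(d-1)}\,dr \;\sim\; \e^{\,d-p},
\]
gives $\|\nabla v_\e\|_{L^p(\O_\e)}\to\infty$ for $p>d$. Since $\|\nabla u_0\|_{L^p}$ is $\e$-independent and $u_0(0)\neq 0$, the first statement follows.

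For the second part I would use duality. Because $T$ is $C^1$, each $\O_\e$ is a $C^1$ domain, so the Helmholtz decomposition in $L^p(\O_\e)$ is available for all $1<p<\infty$. A short calculation using this decomposition yields $N_\e^{(p)}=N_\e^{(p')}$ for the operator norms
\[
N_\e^{(p)}:=\sup\bigl\{\|\nabla u_\e\|_{L^p(\O_\e)}\,:\,f\in L^p(\O_\e;\R^d),\,\|f\|_{L^p}=1\bigr\}.
\]
Applying the first part of the theorem with the conjugate exponent $p'>d$ shows $N_\e^{(p')}\to\infty$. For each $\e$ it then suffices to choose $f_\e$ with $\|f_\e\|_{L^p(\O_\e)}=1$ and $\|\nabla u_\e\|_{L^p(\O_\e)}\ge N_\e^{(p)}/2$, giving the claimed blow-up.

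The main technical difficulty is the asymptotic analysis of $v_\e$: one needs the capacity potential to be close, in the precise sense required for the $L^p$-gradient bound, to its leading-order model $\gamma_T\,\e^{d-2}(|x|^{2-d}-1)$ uniformly in $\e$. This calls for careful energy and maximum-principle estimates on the annulus $\{\e\lesssim|x|\lesssim 1\}$, plus identification of $\gamma_T$ in terms of the capacity of $T$ relative to $B_1$. The $C^1$-regularity of $\O_\e$ used in the duality step is only needed for each \emph{fixed} $\e$, so its degeneration as $\e\to 0$ is not an obstruction.
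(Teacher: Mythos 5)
Your strategy is genuinely different from the paper's, and in its first half it has a real gap. The paper proves Theorem \ref{thm1-d} by declaring the higher-dimensional argument identical to that of Theorems \ref{thm11} and \ref{thm1}: a soft contradiction argument in which one assumes $\liminf_{\e\to0}\|\nabla u_\e\|_{L^p(\O_\e)}<\infty$, extends $u_\e$ by zero, extracts a weak limit $\tilde u\in W^{1,p}_0(B_1)$, shows by a cut-off (capacity-of-a-point) argument that $\tilde u$ solves the Dirichlet problem in the whole ball, and then contradicts $\tilde u(0)=0$ (which comes from $W^{1,p}\hookrightarrow C^{0,1-d/p}$ for $p>d$ and $u_\e=0$ on $\e\d T$) with the Green's function evaluation $\tilde u(0)=\alpha\int_{B_1}(|y|^{2-d}-1)\dive f(y)\,dy\neq 0$. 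No corrector asymptotics are needed. Your direct expansion $u_\e=u_0-w_\e$, $w_\e\approx u_0(0)v_\e$, would if completed give more (the sharp rate $\|\nabla u_\e\|_{L^p}\sim |u_0(0)|\,\e^{(d-p)/p}$), but the crucial estimates are only asserted.

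Concretely: the maximum principle together with continuity of $u_0$ at $0$ yields only the pointwise bound $|w_\e-u_0(0)v_\e|\leq \delta_\e v_\e$ with $\delta_\e=\sup_{\e T}|u_0-u_0(0)|=O(\e)$, whereas your argument needs $\|\nabla(w_\e-u_0(0)v_\e)\|_{L^p(\O_\e)}=o(\|\nabla v_\e\|_{L^p(\O_\e)})$, i.e.\ a gradient bound; sup bounds do not give gradient bounds near $\e\d T$, and you cannot appeal to any uniform $W^{1,p}(\O_\e)$ theory for $p>d$ since the failure of such a theory is exactly what is being proved. One must argue by hand, e.g.\ rescale to the fixed domain $B_2\setminus T$ and use boundary $W^{1,p}$ estimates there plus interior gradient estimates for harmonic functions in the annulus $\e\lesssim|x|\lesssim1$. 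The same goes for the two-sided asymptotics $|\nabla v_\e|\sim\e^{d-2}|x|^{1-d}$: the lower bound, which is what drives the blow-up, needs its own proof (for instance via the flux identity, the flux of $v_\e$ through spheres being comparable to the capacity $\sim\e^{d-2}$, which bounds spherical averages of $\d_r v_\e$ from below and then $\|\nabla v_\e\|_{L^p}$ via Jensen). You flag this as the main technical difficulty but do not carry it out, so the first bullet is not established as written; you should also exhibit an admissible $f$ (e.g.\ $f=(x_1,0,\dots,0)$, $\dive f=1$). The duality half is correct and essentially the paper's argument: the identity $\int_{\O_\e} g\cdot\nabla u_\e\,dx=\int_{\O_\e} f\cdot\nabla v_\e\,dx$ follows directly from the two weak formulations, so $N_\e^{(p)}=N_\e^{(p')}$ without any Helmholtz decomposition; the paper simply realizes the near-maximizer explicitly as $f_\e=|\nabla v_\e|^{p'-2}\nabla v_\e/\|\nabla v_\e\|_{L^{p'}(\O_\e)}^{p'/p}$, which avoids the supremum-selection step.
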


\begin{theorem}\label{thm11-d}
Let $f\in L^p(B_1;\R^d)$  for some $d<p<\infty$ such that $\dive f \in L^{q}(B_1)$ for some $q>d/2$. Suppose that $f$ is independent of $\e$ and satisfies
\be\label{f-ex-d}
\int_{B_1} \left(\frac{1}{|y|^{d-2}}-1\right) \dive f(y)dy\neq 0.\nn
\ee
Then the unique solution $u_\e\in W^{1,p}_0(\O_\e)$  to \eqref{1-d} satisfies
\be\label{est12-d}
\liminf_{\e\to 0}\|\nabla u_\e\|_{L^p(\Omega_\e)} =\infty.\nn
\ee
\end{theorem}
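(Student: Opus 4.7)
The strategy is to argue by contradiction and show that a uniform $W^{1,p}$ bound would force $u_0(0)=0$, where $u_0\in W^{1,p}_0(B_1)$ is the classical solution of $-\Delta u_0=\dive f$ in $B_1$ with zero boundary data; this will collide with \eqref{f-ex-d} via the Green-function representation at the origin. So suppose along some sequence $\e_k\to 0$ one has $\|\nabla u_{\e_k}\|_{L^p(\Omega_{\e_k})}\le M<\infty$, and extend each $u_\e$ by zero on $\e T$ to obtain $\tilde u_\e\in W^{1,p}_0(B_1)$ with the same gradient $L^p$-norm. Since $p>d$, Morrey's embedding gives a uniform bound on $\tilde u_\e$ in $C^{0,1-d/p}(\overline B_1)$, so along a further subsequence $\tilde u_\e\rightharpoonup u$ in $W^{1,p}_0(B_1)$ and $\tilde u_\e\to u$ uniformly on $\overline B_1$. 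Fixing any $z_0\in T$, one has $\tilde u_\e(\e z_0)=0$, and uniform convergence yields $u(0)=0$.

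Next I identify $u$ with $u_0$. For any $\varphi\in C_c^\infty(B_1\setminus\{0\})$ the compact set $\supp \varphi$ is disjoint from $\e T$ once $\e$ is small enough, so the weak form of \eqref{1-d} reads
\[
\int_{B_1}\nabla\tilde u_\e\cdot\nabla\varphi\,dx=-\int_{B_1}f\cdot\nabla\varphi\,dx,
\]
and passing to the limit via $\nabla\tilde u_\e\rightharpoonup\nabla u$ in $L^p$ shows $-\Delta u=\dive f$ in $\mathcal D'(B_1\setminus\{0\})$. Meanwhile $u_0$ exists by Remark \ref{rem:exi}, and because $\dive f\in L^q$ with $q>d/2$ and $B_1$ is smooth, elliptic regularity combined with Sobolev embedding places $u_0\in W^{2,q}(B_1)\hookrightarrow C^{0,2-d/q}(\overline B_1)$. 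The difference $v:=u-u_0$ belongs to $W^{1,p}(B_1)\subset C^0(\overline B_1)$, vanishes on $\partial B_1$, is harmonic on $B_1\setminus\{0\}$ and is bounded near $0$; the classical removable-singularity theorem for harmonic functions then extends $v$ to a harmonic function on all of $B_1$, which by the vanishing boundary data must be identically zero. Hence $u=u_0$ on $\overline B_1$ and in particular $u_0(0)=u(0)=0$.

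The final step is to compute $u_0(0)$ from the Green function. With $G(y,0)=\frac{1}{(d-2)\omega_{d-1}}\bigl(|y|^{2-d}-1\bigr)$, the integrability $G(\cdot,0)\in L^{q'}(B_1)$ (valid since $q>d/2$ forces $q'<d/(d-2)$) legitimizes
\[
u_0(0)=\int_{B_1}G(y,0)\,\dive f(y)\,dy=\frac{1}{(d-2)\omega_{d-1}}\int_{B_1}\bigl(|y|^{2-d}-1\bigr)\dive f(y)\,dy,
\]
and hypothesis \eqref{f-ex-d} gives $u_0(0)\ne 0$, the desired contradiction. I expect the main obstacle to be the removable-singularity step: because $p>d$ means $\delta_0\notin W^{-1,p}$ while simultaneously $p'<d'$ prevents the naive truncation of test functions near the origin, one cannot test the limit PDE directly across $\{0\}$. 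The clean way out is to subtract off the regular solution $u_0$ and exploit the \emph{boundedness} of $v$, supplied by the Morrey embedding, to kill the singularity of the difference rather than of $u$ itself.
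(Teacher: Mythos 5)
Your proposal is correct, and its overall skeleton is the same as the paper's: argue by contradiction, extend $u_\e$ by zero, use Morrey's embedding ($p>d$) to get a uniform H\"older bound and uniform convergence, conclude the limit vanishes at the origin, identify the limit with the solution of the Dirichlet problem in the whole ball, and contradict \eqref{f-ex-d} via the Green-function value at $0$. The one step where you genuinely diverge is the removal of the puncture $\{0\}$: the paper does exactly the ``naive truncation'' you dismiss, testing with $\vp\phi_n$ where $\phi_n$ vanishes on $B_{1/n}$ and $|\nabla\phi_n|\le 2n$, and the error is controlled by $\|\nabla\tilde u+f\|_{L^p}\|\nabla\phi_n\|_{L^{p'}}\le C\,n^{1-d/p'}\to 0$ precisely \emph{because} $p'<d'<d$; so your closing claim that $p'<d'$ ``prevents'' the truncation has the inequality working in the wrong direction --- smallness of $p'$ is what makes the point have zero $W^{1,p'}$-capacity and the cut-off argument succeed. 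Your substitute --- subtract the regular solution $u_0$, note $v=u-u_0$ is bounded (Morrey plus $W^{2,q}$ regularity for $u_0$, $q>d/2$), invoke Weyl's lemma, the removable-singularity theorem for bounded harmonic functions, and the maximum principle --- is perfectly valid and somewhat more classical in flavor; it costs you the elliptic regularity and continuity of $u_0$ up to the boundary (harmless, since continuity of $u_0$ at $0$ is needed anyway for the Green representation, with $|y|^{2-d}\in L^{q'}$ as you check), whereas the paper's truncation needs no information about $u_0$ beyond uniqueness and shows directly that the limit is the solution in $B_1$. A small point in your favor: you only use some $z_0\in T$ and uniform convergence to get $u(0)=0$, while the paper's wording tacitly assumes $0\in T$. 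Minor nit: $W^{2,q}\hookrightarrow C^{0,2-d/q}$ as written presumes $d/2<q<d$; for $q\ge d$ one still gets continuity, which is all you use.
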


\medskip

We give a remark for the case where the boundary $\d T$ is only Lipchitz.
\begin{remark}
If $\d T$ is only Lipchitz, the conclusion in Theorem \ref{thm-d} holds for $p_1'<p<p_1$ for some $p_1>3$. Such a choice range of $p$ is due to the restriction on the well-posedness results to Dirichlet problem \eqref{1-d} in Sobolev spaces $W^{1,p}_0(\O_\e)$ when the domain $\O_\e$ is only Lipchitz (see Remark \ref{rem:exi}).  Accordingly there are modified versions for Theorem \ref{thm1-d} and Theorem \ref{thm11-d}.

\end{remark}

The proof for higher-dimensional case is the same as for the three-dimensional case, so we do not repeat.

\bigskip

The paper is organized as follows: Section 2 and Section 3 are devoted to the proof of Theorem \ref{thm}; Section 4 and Section 5  are devoted to the proof of Theorem \ref{thm11} and Theorem \ref{thm1}, respectively. We give some final remarks in Section 6.

\medskip

In the sequel, $C$ denotes always a constant independent of $\e$ unless there is a specification.

\section{Reformulation}

To study the uniform estimates of the Dirichlet problem \eqref{1} and prove Theorem \ref{thm}, we turn to study the following Dirichlet problem in the rescaled domain:
 \ba\label{2}
-\Delta v&=\dive g,\quad &&\mbox{in}~\widetilde\O_\e,\\
v&=0,\quad &&\mbox{on} ~\d\wto_\e,
\ea
where
\be\label{net-domain}
\widetilde \O_\e:=\O_\e/\e=B_{1/\e}\setminus T, \quad B_{1/\e}:=B(0,1/\e):=\{x\in \R^3:|x|<1/\e\}.
\ee

We have:
\begin{theorem}\label{thm2}
Let $3/2<p<3$ and $g\in L^p(\widetilde\Omega_\e;\R^3)$. Then the unique solution $v\in W_0^{1,p}(\wto_\e)$ to Dirichlet problem \eqref{2} satisfies the estimate:
\be\label{est2}
\|\nabla v\|_{L^p(\wto_\e)}\leq C \ \|g\|_{L^p(\wto_\e)}
\ee
for some $C=C(p)$ independent of $\e$.
\end{theorem}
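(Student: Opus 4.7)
The plan is to decompose the solution as $v = w + \psi$, where $w$ solves the Dirichlet problem on the full ball $B_{1/\e}$ (ignoring the hole) and $\psi$ is the harmonic corrector that enforces the vanishing trace on $\partial T$. The critical range $(3/2, 3)$ should then emerge as precisely the range in which the exterior Dirichlet problem for the Laplacian on $\R^3 \setminus T$ is well-posed in homogeneous $W^{1,p}$, since the corrector $\psi$ converges (after letting $\e \to 0$) to a harmonic function in the exterior of $T$ whose gradient behaves like $|x|^{-2}$ at infinity and is $L^p$-integrable there exactly when $p > 3/2$, with $p < 3$ following by the dual picture.

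First I would extend $g$ by zero on $T$ to $\tilde g \in L^p(B_{1/\e};\R^3)$ and solve
\[
-\Delta w = \dive \tilde g \text{ in } B_{1/\e}, \qquad w = 0 \text{ on } \partial B_{1/\e}.
\]
Because $B_{1/\e}$ is $C^\infty$, Remark \ref{rem:exi} with $p_1=\infty$ applies, and by isotropic rescaling $x \mapsto \e x$ (which preserves the equation and the ratio $\|\nabla w\|_{L^p}/\|\tilde g\|_{L^p}$), the classical constant on the unit ball transfers without modification to $B_{1/\e}$. Thus $\|\nabla w\|_{L^p(B_{1/\e})} \le C(p)\,\|g\|_{L^p(\widetilde\Omega_\e)}$ uniformly in $\e$. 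The corrector $\psi := v - w$ is then harmonic in $\widetilde\Omega_\e$, vanishes on $\partial B_{1/\e}$, and equals $-w|_{\partial T}$ on $\partial T$, and everything reduces to a uniform bound
\[
\|\nabla \psi\|_{L^p(\widetilde\Omega_\e)} \le C\,\|g\|_{L^p(\widetilde\Omega_\e)}.
\]

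To control $\psi$, I would split it as $\psi = \alpha\,\Phi_\e + \eta_\e$, where $\Phi_\e$ is the capacity potential (harmonic in $\widetilde\Omega_\e$, $\Phi_\e = 1$ on $\partial T$, $\Phi_\e = 0$ on $\partial B_{1/\e}$) and the scalar $\alpha$ is chosen so that the remaining boundary datum for $\eta_\e$ has zero mean on $\partial T$. As $\e \to 0$, $\Phi_\e$ converges to the Newtonian capacity potential of $T$, whose gradient has decay $|x|^{-2}$ at infinity; a direct integration gives $\|\nabla \Phi_\e\|_{L^p(\widetilde\Omega_\e)}$ uniformly bounded iff $p > 3/2$. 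For $\eta_\e$, the zero-mean condition produces an extra order of decay ($|x|^{-3}$ for the gradient), so $\|\nabla \eta_\e\|_{L^p}$ is easily uniformly controlled by an appropriate Besov norm of its boundary datum on the \emph{fixed} set $\partial T$. Finally, I estimate $|\alpha|$ and the Besov norm on $\partial T$ by the trace theorem applied to $w$ on the bounded fixed annulus $B_2 \setminus T$, which is controlled by $\|w\|_{W^{1,p}(B_2\setminus T)}$ and hence by $\|g\|_{L^p}$ via Step 1.

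The main obstacle is the uniform harmonic-extension estimate for $\Phi_\e$ and $\eta_\e$: one must show that the presence of the outer boundary $\partial B_{1/\e}$ (receding to infinity) does not degrade the constant one would obtain on $\R^3\setminus T$. I expect this to be handled by comparing $\psi$ with the solution of the exterior problem on $\R^3\setminus T$ having the same trace on $\partial T$ and verifying that the difference (harmonic in $\widetilde\Omega_\e$, vanishing on $\partial T$, and equal to the tail of the exterior solution on $\partial B_{1/\e}$) carries a small $L^p$ gradient because the tail is of order $1/|x|$ with $|x| = 1/\e$ on the outer sphere. The upper restriction $p < 3$ will appear through duality (since the range in Theorem \ref{thm} is symmetric under $p \leftrightarrow p'$) or equivalently as the sharp integrability of $\nabla\Phi_\e$ measured against the test direction—the dual scaling computation $\int_{|x|\sim 1/\e} |x|^{-2p}\,dx$ being finite precisely when the matching exponent stays on the correct side of the critical scale.
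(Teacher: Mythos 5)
Your route (solve on the full ball, then correct with a capacity potential plus a faster-decaying remainder) is genuinely different from the paper's, but as written it has two real gaps, and the first is fatal in the sense that your sketch never actually uses $p<3$: if it were complete it would ``prove'' the uniform estimate for every $p>3/2$, contradicting Theorem \ref{thm1} (which transfers to the rescaled setting by the same scaling as in Proposition \ref{prop:equiv}). Trace the exponents: Step 1 only gives $\|\nabla w\|_{L^p(B_{1/\varepsilon})}\le C\|g\|_{L^p(\widetilde\Omega_\varepsilon)}$. To control the boundary datum $-w|_{\partial T}$, the coefficient $\alpha$, and the Besov norm on $\partial T$, you need a bound on $w$ itself near $T$, and this does \emph{not} follow from Step 1 ``via the trace theorem on $B_2\setminus T$'': $w$ vanishes only on $\partial B_{1/\varepsilon}$, a sphere at distance $1/\varepsilon$, so passing from the gradient bound to a bound on $w$ near the hole is exactly where $p<3$ must enter, through the scale-invariant embedding $W_0^{1,p}(B_{1/\varepsilon})\hookrightarrow L^{p^*}(B_{1/\varepsilon})$, $1/p^*=1/p-1/3$. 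For $p>3$ one only gets $|w|\lesssim \varepsilon^{-(1-3/p)}\|\nabla w\|_{L^p}$ near $T$, which blows up, consistently with the counterexamples. Your attribution of the restriction $p<3$ to ``duality'' or to the integrability of $\nabla\Phi_\varepsilon$ (that computation gives the $p>3/2$ restriction, not $p<3$) is therefore misplaced; the sentence ``controlled by $\|w\|_{W^{1,p}(B_2\setminus T)}$ and hence by $\|g\|_{L^p}$ via Step 1'' is precisely the unproved step, and it is provable only for $p<3$.

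The second gap is the uniform corrector estimate itself, which is asserted rather than proved and is essentially of the same depth as the theorem. Choosing $\alpha$ so that the remaining datum has zero mean on $\partial T$ with respect to surface measure does not in general kill the $|x|^{-1}$ monopole of the exterior harmonic extension; the relevant normalization is zero mean against the equilibrium (harmonic-at-infinity) measure of $T$, and the two differ unless $T$ is special. More importantly, the claim that $\|\nabla\eta_\varepsilon\|_{L^p(\widetilde\Omega_\varepsilon)}$ is bounded, uniformly in $\varepsilon$, by a Besov norm $B^{1-1/p}_{p,p}(\partial T)$ of the datum is a uniform $W^{1,p}$ solvability statement for harmonic functions in $B_{1/\varepsilon}\setminus T$ with a merely Lipschitz $\partial T$; making it rigorous requires exterior layer-potential theory in $L^p$ on Lipschitz domains plus a quantitative comparison showing the outer boundary at distance $1/\varepsilon$ only perturbs the solution admissibly --- you only gesture at this. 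The paper avoids exterior boundary-value theory altogether: it cuts off with a fixed $\varphi$, treats the piece near $T$ by the classical Jerison--Kenig estimate on the fixed domain $B_2\setminus T$, treats the far piece by explicit Green's function estimates in $B_{1/\varepsilon}$ (Lemmas \ref{lem:lap-Be0} and \ref{lem:lap-Be}, where the restrictions $3/2<p<3$ appear concretely), and then removes the lower-order term $\|v\|_{L^p(B_2\setminus T)}$ by a compactness/Liouville contradiction argument. To salvage your approach you would need to (i) prove the uniform harmonic-extension estimate with Besov data on $\partial T$, (ii) correct the monopole normalization, and (iii) insert the Sobolev-embedding step above, which is where $p<3$ genuinely enters.
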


We claim:
 \begin{proposition}\label{prop:equiv}
 Theorem \ref{thm} and Theorem \ref{thm2} are equivalent.
 \end{proposition}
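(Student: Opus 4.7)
The plan is to show equivalence by a direct scaling argument based on the change of variables $x = \e y$, under which $\Omega_\e$ is mapped bijectively onto $\widetilde\O_\e = \O_\e/\e$.

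Concretely, I would set up the following correspondence. Given a solution $u \in W_0^{1,p}(\Omega_\e)$ of \eqref{1} with source $f \in L^p(\Omega_\e;\R^3)$, define
\[
 v(y) := u(\e y), \qquad g(y) := \e f(\e y), \qquad y \in \widetilde\O_\e.
\]
Then a direct computation using the chain rule gives $\nabla_y v(y) = \e (\nabla_x u)(\e y)$ and
\[
 -\Delta_y v(y) = -\e^2 (\Delta_x u)(\e y) = \e^2 (\dive_x f)(\e y) = \dive_y g(y),
\]
while the Dirichlet boundary condition is clearly preserved since $\d\widetilde\O_\e = \d\O_\e/\e$. The correspondence is invertible: given $v$ and $g$ on $\widetilde\O_\e$, set $u(x) := v(x/\e)$ and $f(x) := \e^{-1} g(x/\e)$, which produces a solution of \eqref{1}. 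By uniqueness of the Dirichlet problem in each space (see Remark \ref{rem:exi}), this gives a bijection between solutions of \eqref{1} and solutions of \eqref{2}.

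The next step is to compare the norms. Under the change of variables $x = \e y$ (with Jacobian $\e^3$),
\[
 \|\nabla v\|_{L^p(\widetilde\O_\e)}^p = \int_{\widetilde\O_\e} \e^p |(\nabla u)(\e y)|^p\,dy = \e^{p-3} \|\nabla u\|_{L^p(\O_\e)}^p,
\]
\[
 \|g\|_{L^p(\widetilde\O_\e)}^p = \int_{\widetilde\O_\e} \e^p |f(\e y)|^p\,dy = \e^{p-3} \|f\|_{L^p(\O_\e)}^p.
\]
Both sides scale by the same factor $\e^{(p-3)/p}$, reflecting the scale-invariance of the inequality $\|\nabla u\|_{L^p} \leq C \|f\|_{L^p}$ under the dilation that matches the scaling of the Laplace equation $-\Delta u = \dive f$. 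Hence the constant $C$ in \eqref{est} and the constant $C$ in \eqref{est2} are identical, and the validity of one inequality for some $C = C(p)$ independent of $\e$ is equivalent to the validity of the other with the same constant.

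There is no real obstacle here; the argument is purely a dimensional/scaling bookkeeping. The only point worth emphasizing is that the exponent $p$ plays no privileged role in the scaling: because $\nabla u$ and $f$ enter the equation at the same differential order, their $L^p$ norms transform identically, which is precisely what makes the uniform estimate on $\Omega_\e$ equivalent to the uniform estimate on the expanded domain $\widetilde\O_\e$ (whose diameter tends to infinity as $\e \to 0$).
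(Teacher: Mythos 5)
Your proposal is correct and follows essentially the same route as the paper: the paper also proves the equivalence by the rescaling $\tilde u(\cdot)=u(\e\,\cdot)$, $\tilde f(\cdot)=f(\e\,\cdot)$, which turns \eqref{1} into $-\Delta\tilde u=\e\,\dive\tilde f$ on $\widetilde\O_\e$ and then invokes the estimate on the rescaled domain, the only cosmetic difference being that you absorb the factor $\e$ into $g$ and make the identical scaling of the two $L^p$ norms (hence the equality of the constants) explicit.
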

 \begin{proof}[Proof of Proposition \ref{prop:equiv}]
   We suppose that Theorem \ref{thm2} holds and we want to prove Theorem \ref{thm}. Let $u\in W^{1,p}_0(\O_\e)$ be the unique solution to \eqref{1} with source function $f$ under the assumptions in Theorem \ref{thm}. We need to show the uniform estimate \eqref{est}. We rescale in the variable $x$ to define
\be\label{va-change1}
\tilde u(\cdot):= u(\e \cdot),\quad  \tilde f(\cdot):=f(\e \cdot).
\ee
Then $\tilde u$ and $\tilde f$ are functions defined in $\wto_\e$ and there holds
 \ba\label{eq-change1}
-\Delta \tilde u&=\e\,\dive \tilde f,\quad &&\mbox{in}~\widetilde\O_\e,\\
\tilde u&=0,\quad &&\mbox{on} ~\d\wto_\e.
\ea
We apply Theorem \ref{thm2} to Dirichlet problem \eqref{eq-change1} to obtain
\be\label{est-change1}
\|\nabla \tilde u\|_{L^p(\wto_\e)}\leq C \e\ \|\tilde f\|_{L^p(\wto_\e)}.
\ee

 Then back to the original variable through \eqref{va-change1}, it gives
\be\label{est-change2}
\|\nabla  u\|_{L^p(\O_\e)}\leq C \ \|f\|_{L^p(\O_\e)}.
\ee
The constant $C=C(p)$ in \eqref{est-change1} and \eqref{est-change2} is the same as in Theorem \ref{thm2}, which is independent of $\e$. Thus we proved Theorem \ref{thm}.

\medskip

Proving Theorem \ref{thm2} by assuming Theorem \ref{thm} can be done similarly. We complete the proof of Proposition \ref{prop:equiv}.

\end{proof}

Hence, to prove Theorem \ref{thm}, it is sufficient to prove Theorem \ref{thm2}. This is done in the next section.

\section{Proof of Theorem \ref{thm2}}

This section is devoted to the proof of Theorem \ref{thm2}. At the same time we will have proven Theorem \ref{thm} due to Proposition \ref{prop:equiv}. Inspired by the idea in \cite{KS}, we decompose the Dirichlet problem \eqref{2} into two parts by using some cut-off function. The first part is defined in a bounded Lipchitz domain, so we can employ classical results to obtain uniform estimates. The other part is defined in the enlarging ball $B_{1/\e}$, and we employ the Green's function of Laplace equation to get uniform estimates. In particular, in Section \ref{sec:Be} we show some general results concerning the Dirichlet problem in the ball $B_{1/\e}$ in $\R^d$. These results may be of independent interest.

\medskip

 We assume $0<\e\leq 1/4$ in the sequel for the convenience of defining cut-off functions; otherwise for $ 1/4< \e <1$ the result in Theorem \ref{thm2} is rather classical (see for instance Theorem 0.5 in \cite{JK}).

\subsection{Decomposition}
We introduce the cut-off function:
\be\label{cut-off}
\vp\in C_c^\infty (B_2),\ B_2:=B(0,2),\quad \vp\equiv 1 \ \mbox{in}\  B_1\supset T,\quad 0\leq \vp\leq 1.
\ee

Let $v\in W^{1,p}_0(\wto_\e)$ be the unique solution to \eqref{2} under the assumptions in Theorem \ref{thm2}. We consider the decomposition:
\be\label{v12}
 v=v_1+v_2,\quad v_1:=\vp v, \quad v_2 :=(1-\vp)v.
\ee
Then $v_1$ and $v_2$ solve respectively
 \ba\label{eq-v1}
-\Delta v_1&=\dive (g\vp)-(v\Delta \vp+2\nabla v \nabla \vp+g \nabla \vp),\quad &&\mbox{in}~B_2\setminus T,\\
v_1&=0,\quad &&\mbox{on} ~\d B_2\cup \d T
\ea
and
 \ba\label{eq-v2}
-\Delta v_2&=\dive (g(1-\vp))+(v\Delta \vp+2\nabla v \nabla \vp+g \nabla \vp),\quad &&\mbox{in}~B_{1/\e},\\
v_2&=0,\quad &&\mbox{on} ~\d B_{1/\e}.
\ea
Here, we treat $v_1$ as the solution of the Dirichlet problem in the bounded domain $B_2\setminus T$ and $v_2$ as the solution of the Dirichlet problem in the enlarging ball $B_{1/\e}$.

%\medskip

%We consider the Dirichlet problems \eqref{eq-v1} and \eqref{eq-v2} in the following subsections.

\subsection{Dirichlet problem in bounded domain}\label{sec:v1}

In this section, we consider the the Dirichlet problem \eqref{eq-v1}. Since the domain $B_2\setminus T$ is bounded and Lipchitz, we can employ Theorem 0.5 in \cite{JK} (see also Remark \ref{rem:exi}) to obtain
\be\label{est-v1}
\|v_1\|_{W^{1,p}_0(B_2\setminus T)}\leq C\ \|\dive (g\vp)-(v\Delta \vp+2\nabla v \nabla \vp+g \nabla \vp)\|_{W^{-1,p}(B_2\setminus T)}.
\ee

We estimate the right-hand side of \eqref{est-v1} term by term. Let $\psi \in C_c^\infty(B_2\setminus T)$ be an arbitrary test function, then
\ba\label{est-v11}
&|\langle\dive (g\vp),\psi\rangle|=|\langle(g\vp),\nabla \psi\rangle| \leq \|g\vp\|_{L^p}\|\nabla\psi\|_{L^{p'}}\leq \|g\|_{L^p}\|\nabla\psi\|_{L^{p'}},\\
&|\langle v\Delta \vp,\psi\rangle|\leq \|v\Delta\vp\|_{L^p}\|\psi\|_{L^{p'}}\leq C \|v\|_{L^p}\|\psi\|_{L^{p'}},\\
&|\langle g \nabla \vp,\psi \rangle|\leq \|g\nabla\vp \|_{L^p} \|\psi\|_{L^{p'}}\leq C\|g\|_{L^p} \|\psi\|_{L^{p'}},\\
&|\langle \nabla v\nabla \vp,\psi\rangle|=|\langle\nabla v,\nabla \vp\psi\rangle|=|\langle v, \dive(\nabla \vp\psi)\rangle|=|\langle v, \Delta\vp \psi+\nabla\vp\nabla\psi \rangle |\\
&\quad \leq \|v\|_{L^p}\|\Delta\vp \psi+\nabla\vp\nabla\psi\|_{L^{p'}}\leq C\|v\|_{L^p} \|\psi\|_{W^{1,p'}}.
\ea
In \eqref{est-v11}, the Lebesgue norms are taken in the domain $B_2\setminus T$. The estimates in \eqref{est-v1} and \eqref{est-v11} imply
\be\label{est-v12}
\|v_1\|_{W^{1,p}_0(B_2\setminus T)}\leq C \left(\|v\|_{L^p(B_2\setminus T)}+\|g\|_{L^p(B_2\setminus T)}\right).
\ee

\subsection{Dirichlet problem in enlarging balls}\label{sec:Be}

In this section, we consider the Dirichlet problem of the Laplace equation in $B_{1/\e}\subset \R^d,~d\geq 3$ and we will show some general results which may be of independent interest. The problem reads:
 \ba\label{eq-ow}
-\Delta \o&=\pi,\quad &&\mbox{in}~B_{1/\e}:=\{x\in\R^d:|x|<1/\e\},\\
\o&=0,\quad &&\mbox{on} ~\d B_{1/\e}.
\ea

Our first result concerns the case where the source term $\pi$ is of divergence form:
\begin{lemma}\label{lem:lap-Be0} If $\pi=\dive \eta$ for some $\eta \in L^q(B_{1/\e};\R^d)$ with $q\in (1,\infty)$, then the unique solution $\o$ to \eqref{eq-ow} satisfies
\be\label{est-ow0}
\|\nabla \o\|_{L^q(B_{1/\e})}\leq C\, \|\eta\|_{L^q(B_{1/\e})}
\ee
for some constant $C=C(q,d)$ independent of $\e$.
\end{lemma}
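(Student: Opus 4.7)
The plan is a straightforward scaling argument, exploiting the dilation invariance of the Laplacian together with the fact that both sides of the estimate scale with the same power of $\e$.

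Concretely, I would rescale by $\e$ to transfer the problem from the enlarging ball $B_{1/\e}$ to the fixed unit ball $B_1$. Set $y = \e x$ and define
\ba
\tilde\o(y):=\o(y/\e),\qquad \tilde\eta(y):=\e^{-1}\eta(y/\e),\quad y\in B_1.
\ea
A direct computation shows $\nabla_y\tilde\o=\e^{-1}(\nabla_x\o)(y/\e)$ and $\Delta_y\tilde\o=\e^{-2}(\Delta_x\o)(y/\e)$, while $\dive_y\tilde\eta(y)=\e^{-2}(\dive_x\eta)(y/\e)$. Hence $-\Delta_y\tilde\o=\dive_y\tilde\eta$ in $B_1$, with $\tilde\o=0$ on $\d B_1$.

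Since $B_1$ is smooth, Remark \ref{rem:exi} applies with $p_1=\infty$, so for every $1<q<\infty$ there is a constant $C=C(q,d)$, independent of $\e$, such that
\be
\|\nabla_y\tilde\o\|_{L^q(B_1)}\leq C\,\|\tilde\eta\|_{L^q(B_1)}.\nn
\ee
Now I would simply undo the rescaling. Changing variables $x=y/\e$ with $dy=\e^d\,dx$,
\ba
\|\nabla_y\tilde\o\|_{L^q(B_1)}^q=\e^{d-q}\|\nabla_x\o\|_{L^q(B_{1/\e})}^q,\qquad \|\tilde\eta\|_{L^q(B_1)}^q=\e^{d-q}\|\eta\|_{L^q(B_{1/\e})}^q,
\ea
so the factors $\e^{d-q}$ cancel and one recovers exactly \eqref{est-ow0} with the same constant $C=C(q,d)$.

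There is essentially no obstacle here: the only point to check is the scale-invariance of the estimate, which is ensured by the fact that $\nabla\o$ and $\eta$ are of the same homogeneity under the dilation (the source being of divergence form is precisely what makes the two $L^q$ norms rescale identically). This is also the reason the argument breaks for sources in $L^q$ that are not of divergence form, which will matter in the later parts of the proof.
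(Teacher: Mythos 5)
Your proof is correct and is essentially the same scaling argument as in the paper: rescale to the unit ball, invoke the classical estimate there (Theorem 0.5 of \cite{JK}, i.e.\ Remark \ref{rem:exi}), and check that the divergence-form structure makes both sides of \eqref{est-ow0} rescale identically. The only cosmetic difference is that you absorb the factor $\e^{-1}$ into the definition of $\tilde\eta$, whereas the paper keeps it explicitly in the rescaled equation and cancels it when undoing the change of variables.
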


\begin{proof}[Proof of Lemma \ref{lem:lap-Be0}] The proof of Lemma \ref{lem:lap-Be0} is similar as the proof of Proposition \ref{prop:equiv}. We introduce the change of variables up to a rescalling by $\e$:
$$
\tilde \o(\cdot):=\o(\frac{\cdot}{\e}),\quad \tilde \eta (\cdot):=\eta(\frac{\cdot}{\e}).
$$
Then $(\tilde \o,\tilde \eta)$ solves
 \ba\label{eq-w11}
-\Delta \tilde \o&=\e^{-1}\dive \tilde \eta,\quad &&\mbox{in}~B_{1},\\
\tilde \o&=0,\quad &&\mbox{on} ~\d B_{1}.
\ea
By Theorem 0.5 in \cite{JK}, we deduce
$$
\|\nabla \tilde \o\|_{L^q(B_{1})}\leq C\,\e^{-1} \|\tilde \eta \|_{L^q(B_{1})}
$$
for some $C=C(p,d)$ independent of $\e$. Back to the original variables, we obtain
\be\label{est-w10}
\|\nabla \o\|_{L^q(B_{1/\e})}\leq C \,\|\eta \|_{L^q(B_{1/\e})}.\nn
\ee
 The proof is completed.

\end{proof}

Our second result concerns the case where $\pi$ is compactly supported:
\begin{lemma}\label{lem:lap-Be}
Suppose $d'<q<d$ and $\pi\in L^1\cap W^{-1,q}(\R^d)$ having a compact support that is independent of $\e$. Then the unique solution $\o$ to \eqref{eq-ow}
satisfies
\be\label{est-ow}
\|\nabla \o\|_{L^q(B_{1/\e})}\leq C \left(\|\pi\|_{W^{-1,q}(\R^d)}+\e^{d\left(1-\frac{1}{q}\right)}\|\pi\|_{L^1}\right)
\ee
for some constant $C=C(q,d)$ independent of $\e$.

\end{lemma}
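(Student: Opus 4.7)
The plan is to decompose the solution as $\omega = \Phi - H$, where $\Phi := N \ast \pi$ is the Newtonian potential on $\R^d$ (with fundamental solution $N(x) = c_d |x|^{-(d-2)}$ satisfying $-\Delta N = \delta_0$) and $H$ is the harmonic function on $B_{1/\e}$ with boundary trace $H|_{\partial B_{1/\e}} = \Phi|_{\partial B_{1/\e}}$. Then $-\Delta \omega = \pi$ in $B_{1/\e}$ and $\omega = 0$ on $\partial B_{1/\e}$, and the right-hand side of \eqref{est-ow} corresponds naturally to the two contributions $\|\nabla\Phi\|_{L^q(B_{1/\e})}$ and $\|\nabla H\|_{L^q(B_{1/\e})}$.

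For the Newtonian potential I would write $\pi = \dive\eta$ for some $\eta \in L^q(\R^d)$ with $\|\eta\|_{L^q(\R^d)} \leq C\|\pi\|_{W^{-1,q}(\R^d)}$; integrating by parts once gives $\partial_i \Phi = \sum_j \partial_i\partial_j N \ast \eta_j$, and since $\partial_i\partial_j N$ are Calder\'on--Zygmund kernels, the classical $L^q$-boundedness of CZ operators yields $\|\nabla \Phi\|_{L^q(\R^d)} \leq C\|\pi\|_{W^{-1,q}(\R^d)}$, which takes care of the first term in \eqref{est-ow} and is independent of $\e$.

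The delicate part is the estimate on $\nabla H$. The naive bound $|\Phi(x)| \leq C|x|^{-(d-2)}\|\pi\|_{L^1}$ combined with the maximum principle only gives $\|H\|_{L^\infty(B_{1/\e})} \leq C \e^{d-2}\|\pi\|_{L^1}$, and converting this into a gradient estimate via rescaling to $B_1$ and elliptic regularity loses a full power of $\e$ compared with the target $\e^{d(1-1/q)}$. To recover the sharp exponent I would subtract off the leading asymptotic by setting $\Phi_\infty(x) := c_d |x|^{-(d-2)} \int_{\R^d} \pi$, which is radial and hence constant on $\partial B_{1/\e}$. Writing $H = H_1 + H_2$ with $H_1 \equiv \Phi_\infty|_{\partial B_{1/\e}}$ (a constant, so $\nabla H_1 \equiv 0$) and $H_2$ harmonic with boundary data $r := \Phi - \Phi_\infty$, a first-order Taylor expansion of $|x-y|^{-(d-2)}$ in $y$ around $0$ (using the compact, $\e$-independent support of $\pi$) gives the improved decay $|r(x)| \leq C|x|^{-(d-1)}\|\pi\|_{L^1}$ and $|\nabla r(x)| \leq C|x|^{-d}\|\pi\|_{L^1}$ for $|x|$ large.

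Finally, I would rescale by $\tilde H_2(y) := H_2(y/\e)$ on $B_1$: $\tilde H_2$ is harmonic with boundary data $\tilde g_2(y) = r(y/\e)$ satisfying $\|\tilde g_2\|_{W^{1,\infty}(\partial B_1)} \leq C \e^{d-1}\|\pi\|_{L^1}$, hence $\|\tilde g_2\|_{W^{1-1/q,q}(\partial B_1)} \leq C\e^{d-1}\|\pi\|_{L^1}$. Standard elliptic regularity on the smooth domain $B_1$ yields $\|\nabla \tilde H_2\|_{L^q(B_1)} \leq C\|\tilde g_2\|_{W^{1-1/q,q}(\partial B_1)}$, and the scaling identity $\|\nabla\tilde H_2\|_{L^q(B_1)} = \e^{d/q-1}\|\nabla H_2\|_{L^q(B_{1/\e})}$ converts this into $\|\nabla H\|_{L^q(B_{1/\e})} \leq C\e^{d(1-1/q)}\|\pi\|_{L^1}$, matching the second term in \eqref{est-ow}. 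The main obstacle is precisely the extraction of $\Phi_\infty$: without it the boundary data of $H$ is genuinely of size $\e^{d-2}$ rather than $\e^{d-1}$, but its spherically symmetric part is killed by the harmonic extension and therefore contributes nothing to $\nabla H$.
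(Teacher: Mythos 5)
Your estimate of the harmonic correction $H$ is fine, but the Newtonian-potential step hides the real content of the lemma. The single clause ``write $\pi=\dive\eta$ for some $\eta\in L^q(\R^d)$ with $\|\eta\|_{L^q(\R^d)}\le C\|\pi\|_{W^{-1,q}(\R^d)}$'' is a genuine gap, not a citable routine fact. On a bounded domain one can indeed solve $-\Delta w=\pi$ and take $\eta=-\nabla w$, but the zero extension of that field picks up a surface contribution in its divergence, and no compactly supported $\eta$ can exist at all when $\langle\pi,1\rangle\neq0$; the only natural global candidate is $\eta=-\nabla(N\ast\pi)=-\nabla\Phi$, whose $L^q(\R^d)$ bound by $\|\pi\|_{W^{-1,q}(\R^d)}$ is, via the same Calder\'on--Zygmund theorem you invoke, equivalent to the estimate on $\nabla\Phi$ you are trying to prove. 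Moreover the asserted representation is simply false for $q\le d'$ whenever $\int\pi\neq 0$: pairing $\dive\eta=\pi$ with cutoffs $\chi(x/R)$ gives $|\langle\pi,\chi(\cdot/R)\rangle|\le C\,\|\eta\|_{L^q(R\le|x|\le 2R)}\,R^{\,d-1-d/q}\to 0$, while the left side tends to $\int\pi$. Correspondingly the lemma itself fails for $q\le d'$ (a fixed bump with nonzero mean makes $\|\nabla\omega\|_{L^q(B_{1/\e})}$ blow up as $\e\to0$), so any complete proof must use $d'<q$ somewhere --- and your write-up never does, which is exactly the sign that the missing argument lives in this step.

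The gap is repairable in two ways. One is the paper's route for its $m_1$: test $\nabla\Phi$ against $\psi\in C_c^\infty(B_{1/\e})$, insert a cutoff $\chi\equiv1$ on ${\rm supp}\,\pi$, bound $\|\chi\,\dive(\Phi\ast\psi)\|_{W^{1,q'}}$ by splitting $\d_j\Phi$ into a near part in $L^1$ and a far part decaying like $|x|^{1-d}$ (this is precisely where $q>d'$ enters) and using Calder\'on--Zygmund for the second-order term. Alternatively, prove your representation directly: solve $-\Delta w=\pi$ in $W_0^{1,q}(B_3)$ with ${\rm supp}\,\pi\subset B_2$, extend $-\nabla w$ by zero, and correct the resulting flux distribution $\mu$ on $\d B_3$ (a bounded density of total mass $\langle\pi,1\rangle$) by $-\nabla N\ast\mu$, which lies in $L^q(\R^d)$ precisely because $q>d'$. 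Once this is done, your treatment of $H$ is not only correct but a genuinely different and arguably cleaner route than the paper's: the paper differentiates the explicit image kernel and integrates the pointwise bound $\e^{d-1}|x|^{-1}+\e^{d}$, which is what forces $q<d$, whereas your subtraction of the constant monopole trace, rescaling to $B_1$, and $W^{1-1/q,q}$ boundary regularity deliver the sharp factor $\e^{d(1-1/q)}$ without that restriction.
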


\begin{proof}[Proof of Lemma \ref{lem:lap-Be}]Without loss of generality, we assume $0<\e<1/4$ and
$$
{\rm supp}\, \pi \subset B_2:=\{x\in \R^d:|x|<2\}.
$$

We recall the Green's function of the Laplace equation in the ball $B_{1/\e}$:
\be\label{green}
G_\e(x,y)=\Phi(x-y) -\Phi \left(\e|x|\left(\frac{x}{\e^2|x|^2}-y\right)\right),
\ee
where $\Phi(x)=\alpha/|x|^{d-2}$ is the fundamental solution of the Laplace operator in $\R^d,~d\geq 3$. One can derive \eqref{green} by using the Green's function of the Laplace equation in the unit ball
\be\label{green-ball}
G(x,y)=\Phi(x-y) -\Phi \left(|x|\left(\frac{x}{|x|^2}-y\right)\right),\nn
\ee
and the fact that
$$
G_\e(x,y)=\Phi(x-y)-\phi^y(x),
$$
where $\phi^y(x)$ is the solution to
$$\Delta_x \phi^y(x)=0 \quad \mbox{in}\ B_{1/\e}, \quad \phi^y(x)=\Phi(x-y)\quad \mbox{on}\ \d B_{1/\e}.$$

\smallskip

By employing the Green's function, we can write the solution $\o$ to \eqref{eq-ow} as
\be\label{w2-green}
\o(x)=\int_{B_{1/\e}} G_\e(x,y)  \pi(y) dy=m_1(x)+m_2(x),\nn
\ee
where
\ba\label{m12}
&m_1(x):=\int_{B_{2}} \Phi(x-y) \pi(y) dy=(\Phi\ast \pi)(x),\\
&m_2(x):=-\int_{B_{2}} \Phi \left(\e|x|\left(\frac{x}{\e^2|x|^2}-y\right)\right) \pi (y) dy.\nn
\ea

Let $\psi\in C_c^\infty(B_{1/\e};\R^d)$ be an arbitrary test function and $\chi\in C_c^\infty(B_2)$ be a cut-off function such that
$$
\chi\equiv 1 \quad \mbox{on}\ {\rm \supp}\,\pi , \quad 0\leq \chi\leq 1.
$$
Then we have
\ba\label{est-m1}
&|\langle\nabla m_1,\psi \rangle|=|\langle\Phi\ast \pi ,\dive \psi \rangle|= |\langle \pi ,\Phi\ast \dive \psi \rangle|=|\langle \pi ,\dive (\Phi\ast \psi) \rangle|\\
&=|\langle \pi ,\chi\dive (\Phi\ast \psi) \rangle|\leq \|\pi \|_{W^{-1,q}(\R^d)}\,\|\chi\dive (\Phi\ast \psi)\|_{W^{1,q'}(B_2)}\\
&\leq C\,\|\pi \|_{W^{-1,q}(\R^d)}\,\|\dive (\Phi\ast \psi)\|_{W^{1,q'}(B_2)}.
\ea
We consider
\ba\label{est-Phi1}
&\|\dive (\Phi\ast \psi)\|_{L^{q'}(B_2)}\leq C\,\sum_{j=1}^d\|(\d_{j}\Phi\ast \psi)\|_{L^{q'}(B_2)}\\
&\quad \leq C\,\sum_{j=1}^d\left(\big\|(1_{B_6}\d_{j}\Phi)\ast \psi\big\|_{L^{q'}(B_2)}+\big\|(1_{B_6^c}\d_{j}\Phi)\ast \psi\big\|_{L^{q'}(B_2)}\right),
\ea
where $1_{B_6}$ and $1_{B_6^c}$ are character functions defined as
$$
1_{B_6}(x)=1\quad \mbox{for}\quad x\in B_6;\quad 1_{B_6}(x)=0\quad \mbox{for}\quad x\in B_6^c;\quad 1_{B_6}+1_{B_6^c}\equiv 1,
$$
where we used the notations
$$B_6:=\{x\in \R^d,\ |x|<6\},\quad B_6^c:=\{x\in \R^d,\ |x|\geq 6\}.$$

\medskip

Young's inequality implies
\ba\label{est-Phi2}
\big\|(1_{B_6}\d_{j}\Phi)\ast \psi\big\|_{L^{q'}(B_2)}\leq C\,\big\|(1_{B_6}\d_{j}\Phi)\big\|_{L^{1}(\R^d)}\big\| \psi\big\|_{L^{q'}(\R^d)}\leq C\,\big\| \psi\big\|_{L^{q'}(\R^d)},
\ea
where we used the fact that
\be\label{est-Phi3}
\d_{j}\Phi=-\a(d-2)\frac{x_j}{|x|^d}\in L^1_{loc}(\R^d).\nn
\ee

\medskip

We then calculate
\be\label{est-Phi4}
\big\|(1_{B_6^c}\d_{j}\Phi)\ast \psi\big\|_{L^{q'}(B_2)}=\a(d-2)\left(\int_{|x|\leq 2}\left|\int_{|x-y|\geq 6} \frac{x_j-y_j}{|x-y|^d}\, \psi(y)\,dy \right|^{q'}dx\right)^{\frac {1}{q'}}.
\ee
For any $(x,y)$ such that $|x|\leq 2$ and $|x-y|\geq 6$, there holds
\be\label{est-Phi5}
|y|\geq |x-y|-|x|\geq 4\geq 2|x|,\quad |x-y|\geq |y|-|x|\geq |y|/2.
\ee
Then by \eqref{est-Phi4} and \eqref{est-Phi5}, we have
\ba\label{est-Phi6}
&\big\|(1_{B_6^c}\d_{j}\Phi)\ast \psi\big\|_{L^{q'}(B_2)}\leq C\left(\int_{|x|\leq 2}\left|\int_{|y|\geq 4} \frac{1}{|y|^{d-1}} \,|\psi(y)| \,dy \right|^{q'}dx\right)^{\frac {1}{q'}}\\
&\quad \leq C\left(\int_{|x|\leq 2}\left|\int_{|y|\geq 4} \frac{1}{|y|^{(d-1)q}} \,dy \right|^{\frac {q'}{q}} \left| \int_{|y|\geq 4} |\psi(y)|^{q'}\,dy \right| dx\right)^{\frac {1}{q'}}\\
&\quad \leq C \,\|\psi\|_{L^{q'}(\R^d)},
\ea
where we used the fact that
\be\label{est-Phi7}
q>d'=\frac{d}{d-1},\quad (d-1)q>d,\quad \left|\int_{|y|\geq 4} \frac{1}{|y|^{(d-1)q}} \,dy \right|<\infty.
\ee

Thus, the estimates \eqref{est-Phi1}, \eqref{est-Phi2} and \eqref{est-Phi6} imply
\be\label{est-Phi-f}
\|\dive (\Phi\ast \psi)\|_{L^{q'}(B_2)}\leq C\,\|\psi\|_{L^{q'}(\R^d)}=C\,\|\psi\|_{L^{q'}(B_{1/\e})}.
\ee

\medskip

By the classical Calder\'on-Zygmund theorem, direct calculation gives
\be\label{est-m10}
\|\nabla \dive (\Phi\ast \psi)\|_{L^{q'}(\R^d)}\leq \sum_{i,j=1}^d \| (\d_i\d_j\Phi)\ast \psi\|_{L^{q'}(\R^d)}\leq C\, \|\psi\|_{L^{q'}(\R^d)}=C\, \|\psi\|_{L^{q'}(B_{1/\e})}.
\ee
In fact, the convolution operator
$$
(\d_i\d_j\Phi)\ast \psi=\mathcal{R}_i\mathcal{R}_j \psi,
$$
where $\mathcal{R}_i,~i\in \{1,2,\cdots,d\}$ are the Riesz operators which are bounded from $L^q(\R^d)$ to $L^q(\R^d)$ for any $1<q<\infty$.

By \eqref{est-m1}, \eqref{est-Phi-f} and \eqref{est-m10}, we obtain
\be\label{est-m12}
\|\nabla m_1\|_{L^{q}(B_{1/\e})}\leq C\, \|\pi \|_{W^{-1,q}(\R^d)}.
\ee

\bigskip

For $m_2$,  we recall its definition from \eqref{m12}:
\be\label{def-m2}
 m_2(x)=-\frac{\alpha }{\e^{d-2}|x|^{d-2}}\int_{B_2} \frac{\pi (y)}{\left|\frac{x}{\e^2|x|^2}-y\right|^{d-2}} \,dy.
\ee
Then
\ba\label{est-m2}
&\d_{x_j} m_2(x)=\frac{\alpha (d-2)x_j}{\e^{d-2}|x|^d}\int_{B_2} \frac{\pi (y)}{\left|\frac{x}{\e^2|x|^2}-y\right|^{d-2}} \,dy\\
&\qquad-\frac{\alpha }{\e^{d-2}|x|^{d-2}}\int_{B_2}\pi (y)\d_{x_j}\left(\left|\frac{x}{\e^2|x|^2}-y\right|^{2-d}\right)\, dy.
\ea
Direct calculation gives
\ba\label{est-m200}
&\d_{x_j}\left(\left|\frac{x}{\e^2|x|^2}-y\right|^{2-d}\right)=(2-d)\left|\frac{x}{\e^2|x|^2}-y\right|^{-d}
\left(-\frac{x_j}{\e^4|x|^4}-\frac{y_j}{\e^2|x|^2}+\frac{2x_j(x\cdot y)}{\e^2|x|^4}\right).\nn
\ea
Then we have
\ba\label{est-m201}
&\nabla m_2(x)=\frac{\alpha (d-2)x}{\e^{d-2}|x|^d}\int_{B_2} \frac{\pi (y)}{\left|\frac{x}{\e^2|x|^2}-y\right|^{d-2}} \,dy - \frac{\alpha (d-2)x}{\e^{d+2}|x|^{d+2}}\int_{B_2}\frac{\pi (y)}{\left|\frac{x}{\e^2|x|^2}-y\right|^{d}}\, dy\\
&\qquad - \frac{\alpha (d-2)}{\e^{d}|x|^{d}}\int_{B_2}\frac{\pi (y)y}{\left|\frac{x}{\e^2|x|^2}-y\right|^{d}}\, dy+\frac{2\alpha (d-2)x}{\e^{d}|x|^{d+2}}\int_{B_2}\frac{\pi (y)(x\cdot y)}{\left|\frac{x}{\e^2|x|^2}-y\right|^{d}}\, dy.
\ea

We consider
\ba\label{est-m202}
&I_1:=\frac{\alpha (d-2)x}{\e^{d-2}|x|^d}\int_{B_2} \frac{\pi (y)}{\left|\frac{x}{\e^2|x|^2}-y\right|^{d-2}} \,dy - \frac{\alpha (d-2)x}{\e^{d+2}|x|^{d+2}}\int_{B_2}\frac{\pi (y)}{\left|\frac{x}{\e^2|x|^2}-y\right|^{d}}\, dy\\
&\quad=\frac{\alpha (d-2)x}{\e^{d+2}|x|^{d+2}}\int_{B_2} \frac{\pi (y)}{\left|\frac{x}{\e^2|x|^2}-y\right|^{d}} \left(\e^4|x|^2 \left|\frac{x}{\e^2|x|^2}-y\right|^{2} -1\right)\,dy.
\ea
For any $y\in B_2$ and $x\in B_{1/\e}$ with $0<\e<1/4$ there holds,
\be\label{est-m203}
\left|\e^4|x|^2 \left|\frac{x}{\e^2|x|^2}-y\right|^{2} -1\right|=\left| \left|\frac{x}{|x|}-\e^2|x| y\right|^{2} -1\right|\leq 4\e
\ee
and
\be\label{est-m20}
\left|\frac{x}{\e^2|x|^2}-y\right|\geq \frac{1}{\e^2|x|}-2\geq \frac{1}{2\e^2|x|}.
\ee
By \eqref{est-m202}, \eqref{est-m203} and \eqref{est-m20}, we obtain
\be\label{est-m204}
|I_1| \leq \frac{C\e^{d-1}}{|x|} \|\pi \|_{L^1}.
\ee
Again by using \eqref{est-m20}, we have
\ba\label{est-m205}
\left|- \frac{\alpha (d-2)}{\e^{d}|x|^{d}}\int_{B_2}\frac{\pi (y)y}{\left|\frac{x}{\e^2|x|^2}-y\right|^{d}}\, dy
+\frac{2\alpha (d-2)x}{\e^{d}|x|^{d+2}}\int_{B_2}\frac{\pi (y)(x\cdot y)}{\left|\frac{x}{\e^2|x|^2}-y\right|^{d}}\, dy\right|\leq C\,\e^d \|\pi\|_{L^1}.
\ea

The estimates \eqref{est-m201}, \eqref{est-m204} and \eqref{est-m205} imply
\ba\label{est-m21}
|\nabla m_2(x)|\leq C\left(\e^{d-1}|x|^{-1}+\e^d\right) \|\pi \|_{L^1}.
\ea
Since $q<d$, we have
\ba\label{est-m22}
&\left\|\e^{d-1}|x|^{-1}\right\|_{L^q(B_{1/\e})}=\e^{d-1}\left(\int_{B_{1/\e}}|x|^{-q}dx\right)^{\frac{1}{q}}\leq C \e^{d\left(1-\frac{1}{q}\right)},\\
&\|\e^d\|_{L^q(B_{1/\e})}\leq C \e^{d\left(1-\frac{1}{q}\right)}.
\ea
By \eqref{est-m21} and \eqref{est-m22}, we finally derive
\ba\label{est-m23}
\left\|\nabla m_2\right\|_{L^q(B_{1/\e})}\leq  C \,\e^{d\left(1-\frac{1}{q}\right)}\|\pi \|_{L^1}.
\ea
We obtain \eqref{est-ow} by summing up the estimates for $m_1$ and $m_2$ in \eqref{est-m12} and \eqref{est-m23}. This completes the proof of Lemma \ref{lem:lap-Be}.
\end{proof}

\subsection{A further decomposition}

We will apply Lemma \ref{lem:lap-Be0} and Lemma \ref{lem:lap-Be} to study Dirichlet problem \eqref{eq-v2} in $v_2$. It is convenient to consider the following decomposition:
\be\label{w12}
v_2:=w_1+w_2,
\ee
where $w_1$ and $w_2$ solve respectively
 \ba\label{eq-w1}
-\Delta w_1&=\dive (g(1-\vp)),\quad &&\mbox{in}~B_{1/\e},\\
w_1&=0,\quad &&\mbox{on} ~\d B_{1/\e}
\ea
and
 \ba\label{eq-w2}
-\Delta w_2&=(v\Delta \vp+2\nabla v \nabla \vp+g \nabla \vp),\quad &&\mbox{in}~B_{1/\e},\\
w_2&=0,\quad &&\mbox{on} ~\d B_{1/\e}.
\ea

\medskip

Thus, Dirichlet problem \eqref{eq-w1} has a source term of divergence form so that we can apply Lemma \ref{lem:lap-Be0} and Dirichlet problem \eqref{eq-w2} has a source term being compactly supported so that we can apply Lemma \ref{lem:lap-Be}.

\medskip

By the properties of $\vp$ in \eqref{cut-off}, we have
$$
\|g(1-\vp)\|_{L^p(B_{1/\e})}\leq \|g\|_{L^p(\wto_\e)},
$$
Then applying Lemma \ref{lem:lap-Be0} to Dirichlet problem \eqref{eq-w1} gives:
\begin{proposition}\label{prop-w1} The unique solution $w_1$ to \eqref{eq-w1} satisfies
\be\label{est-w1}
\|\nabla w_1\|_{L^p(B_{1/\e})}\leq C\, \|g\|_{L^p(\wto_\e)}
\ee
for some $C=C(p)$ independent of $\e$.
\end{proposition}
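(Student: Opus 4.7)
The statement of Proposition \ref{prop-w1} is an immediate application of Lemma \ref{lem:lap-Be0}, so my plan is to simply check the hypotheses and assemble the conclusion. The only mild subtlety is that the source $g$ is a priori only defined on $\wto_\e = B_{1/\e}\setminus T$, whereas Lemma \ref{lem:lap-Be0} is stated for functions on $B_{1/\e}$. But since the cut-off $\vp$ from \eqref{cut-off} satisfies $\vp \equiv 1$ on $B_1 \supset T$, the product $g(1-\vp)$ vanishes on a neighborhood of $T$, so extending it by zero on $T$ yields an element of $L^p(B_{1/\e};\R^3)$ without any loss of regularity.

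The plan is therefore: first, set $\eta := g(1-\vp)$, extended by zero across $T$, and verify that Dirichlet problem \eqref{eq-w1} takes exactly the form $-\Delta w_1 = \dive \eta$ treated in Lemma \ref{lem:lap-Be0} (with $q = p \in (3/2,3) \subset (1,\infty)$). Second, apply Lemma \ref{lem:lap-Be0} to conclude
\[
\|\nabla w_1\|_{L^p(B_{1/\e})} \leq C(p)\,\|\eta\|_{L^p(B_{1/\e})}
\]
with $C(p)$ independent of $\e$. Third, bound the right-hand side using $0 \leq 1-\vp \leq 1$:
\[
\|\eta\|_{L^p(B_{1/\e})} = \|g(1-\vp)\|_{L^p(\wto_\e)} \leq \|g\|_{L^p(\wto_\e)}.
\]
Combining these two inequalities yields \eqref{est-w1}. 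There is no real obstacle here; all the work has already been absorbed into Lemma \ref{lem:lap-Be0}, whose proof in turn reduced matters, via rescaling, to the classical Jerison--Kenig estimate on the fixed unit ball.
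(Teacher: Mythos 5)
Your proof is correct and follows exactly the paper's own argument: the paper likewise applies Lemma \ref{lem:lap-Be0} to the divergence-form source $g(1-\vp)$ and bounds $\|g(1-\vp)\|_{L^p(B_{1/\e})}\leq \|g\|_{L^p(\wto_\e)}$ using \eqref{cut-off}. Your remark about extending $g(1-\vp)$ by zero across $T$ is a small clarification the paper leaves implicit, not a different route.
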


\medskip

For the Dirichlet problem \eqref{eq-w2}, we have the following proposition by using Lemma \ref{lem:lap-Be}:
\begin{proposition}\label{prop-w2} Let
$\pi :=v\Delta \vp+2\nabla v \nabla \vp+g \nabla \vp$
be the right-hand side of equation $\eqref{eq-w2}_1$.
Then $\pi$ is compactly supported in $B_2\setminus T$ and the unique solution $w_2$ to \eqref{eq-w2} satisfies
\be\label{est-w2}
\|\nabla w_2\|_{L^p(B_{1/\e})}\leq C\, \e^{3-\frac{3}{p}}\|\pi \|_{L^1}+C\, \|\pi \|_{W^{-1,p}(B_2\setminus T)}
\ee
for some $C=C(p)$ independent of $\e$.
\end{proposition}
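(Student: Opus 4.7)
The plan is to verify that the source $\pi$ satisfies the hypotheses of Lemma \ref{lem:lap-Be} with parameters $d=3$ and $q=p$ (which is admissible since $3/2 < p < 3$), apply that lemma to the problem \eqref{eq-w2}, and then convert the resulting $W^{-1,p}(\R^3)$ norm into the $W^{-1,p}(B_2\setminus T)$ norm appearing in \eqref{est-w2}.

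First I would record the support property of $\pi$. The cutoff $\vp$ in \eqref{cut-off} is identically $1$ on $B_1\supset T$ and vanishes outside $B_2$, so $\nabla\vp$ and $\Delta\vp$ are both supported in the closed annulus $\overline{B_2}\setminus B_1$, which is a compact subset of $B_2\setminus T$ independent of $\e$. Hence each of the three terms $v\Delta\vp$, $2\nabla v\cdot\nabla\vp$, $g\cdot\nabla\vp$ that define $\pi$ is supported there as well. Since $v\in W^{1,p}_0(\wto_\e)$ and $g\in L^p(\wto_\e)$, all three terms lie in $L^p$ with compact support, so $\pi\in L^p\cap L^1\subset L^1\cap W^{-1,p}(\R^3)$, and Lemma \ref{lem:lap-Be} applies to \eqref{eq-w2}, yielding
$$
\|\nabla w_2\|_{L^p(B_{1/\e})} \le C\bigl(\|\pi\|_{W^{-1,p}(\R^3)} + \e^{3-3/p}\|\pi\|_{L^1}\bigr).
$$

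It then remains to replace $\|\pi\|_{W^{-1,p}(\R^3)}$ by the smaller-domain norm $\|\pi\|_{W^{-1,p}(B_2\setminus T)}$. For this I would introduce an auxiliary cutoff $\chi\in C_c^\infty(B_2\setminus T)$ with $\chi\equiv 1$ on a neighborhood of $\overline{B_2}\setminus B_1$, which is possible because the distance between $\overline{B_2}\setminus B_1$ and $\d T$ is strictly positive and independent of $\e$. For any test function $\phi\in C_c^\infty(\R^3)$ the localization $\chi\phi$ lies in $W^{1,p'}_0(B_2\setminus T)$ and satisfies $\|\chi\phi\|_{W^{1,p'}_0(B_2\setminus T)}\le C\|\phi\|_{W^{1,p'}(\R^3)}$ by the Leibniz rule, while $\pi=\chi\pi$ as a distribution gives $\langle\pi,\phi\rangle=\langle\pi,\chi\phi\rangle$. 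Taking the supremum over $\phi$ then yields $\|\pi\|_{W^{-1,p}(\R^3)}\le C\|\pi\|_{W^{-1,p}(B_2\setminus T)}$, and combining with the displayed inequality produces \eqref{est-w2}.

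The argument is mostly mechanical once Lemma \ref{lem:lap-Be} is in hand; the only genuinely delicate point is the last norm comparison, which relies on the geometric fact that $\mathrm{dist}(\supp\pi,\d T\cup\d B_2)>0$ uniformly in $\e$, ensuring that the cutoff $\chi$ can be chosen independent of $\e$ and that the constant $C$ in \eqref{est-w2} does not degenerate as $\e\to 0$.
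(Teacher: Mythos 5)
Your proposal is correct and follows essentially the same route as the paper: note that $\supp\pi$ is a fixed compact subset of $B_2\setminus T$, apply Lemma \ref{lem:lap-Be} with $d=3$, $q=p$ (admissible since $3/2<p<3$), and use $\|\pi\|_{W^{-1,p}(\R^3)}\leq C\|\pi\|_{W^{-1,p}(B_2\setminus T)}$, which the paper states without proof and you justify by the cutoff $\chi$. The only minor imprecision is describing the support as the closed annulus $\overline{B_2}\setminus B_1$ (which meets $\d B_2$); since $\vp$ has compact support in $B_2$, the support of $\pi$ is in fact compactly contained in $B_2\setminus T$, which is what your cutoff argument actually needs.
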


\begin{proof}[Proof of Proposition \ref{prop-w2}] By the choice of $\vp$ in \eqref{cut-off}, we have that ${\rm supp}\,\pi \subset (B_2\setminus B_1)\subset (B_2\setminus T)$. The estimate \eqref{est-w2} follows by applying Lemma \ref{lem:lap-Be} with $d=3$ and the fact
$$
\|\pi\|_{W^{-1,q}(\R^d)}\leq C\,\|\pi\|_{W^{-1,q}(B_2\setminus T)}.
$$

\end{proof}

\subsection{End of the proof}

Based on the estimate \eqref{est-v12}, Proposition \ref{prop-w1} and Proposition \ref{prop-w2}, we are ready to prove the following crucial proposition:
\begin{proposition}\label{prop:v}
Let $v$ be the unique solution to \eqref{2} under the assumptions in Theorem \ref{thm2}. Then there holds the estimate
\be\label{est-vw1}
\|\nabla v\|_{L^p(\wto_{\e})} \leq C \left(\|v\|_{L^p(B_2\setminus T)}+\|g\|_{L^p(\wto_\e)}\right).
\ee
\end{proposition}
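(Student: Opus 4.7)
The plan is to simply collect the three estimates from the decomposition $v = v_1 + w_1 + w_2$ and then control the two norms of $\pi := v\Delta\vp + 2\nabla v\cdot\nabla\vp + g\nabla\vp$ that appear on the right-hand side of \eqref{est-w2}. Summing the bounds in \eqref{est-v12}, Proposition \ref{prop-w1}, and Proposition \ref{prop-w2}, and using that $B_2\setminus T \subset \wto_\e$, I obtain
\begin{equation*}
\|\nabla v\|_{L^p(\wto_\e)} \leq C\bigl(\|v\|_{L^p(B_2\setminus T)} + \|g\|_{L^p(\wto_\e)}\bigr) + C\,\e^{3-3/p}\|\pi\|_{L^1} + C\,\|\pi\|_{W^{-1,p}(B_2\setminus T)}.
\end{equation*}

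To handle the $W^{-1,p}$ norm I would first eliminate the $\nabla v$ term by the identity $2\nabla v\cdot\nabla\vp = 2\dive(v\nabla\vp) - 2v\Delta\vp$, so that
\begin{equation*}
\pi = -v\Delta\vp + 2\dive(v\nabla\vp) + g\nabla\vp .
\end{equation*}
Pairing with an arbitrary $\psi \in C_c^\infty(B_2\setminus T)$ and integrating by parts on the divergence piece immediately yields, by H\"older,
\begin{equation*}
\|\pi\|_{W^{-1,p}(B_2\setminus T)} \leq C\bigl(\|v\|_{L^p(B_2\setminus T)} + \|g\|_{L^p(B_2\setminus T)}\bigr),
\end{equation*}
since the cut-off $\vp$ and its derivatives are bounded functions. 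For the $L^1$ norm, I would use that $\supp \pi \subset (B_2\setminus B_1)$ has bounded (finite) measure independent of $\e$, so H\"older on this fixed region gives
\begin{equation*}
\|\pi\|_{L^1} \leq C\bigl(\|v\|_{L^p(B_2\setminus T)} + \|\nabla v\|_{L^p(B_2\setminus T)} + \|g\|_{L^p(B_2\setminus T)}\bigr).
\end{equation*}

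Substituting these two bounds leaves
\begin{equation*}
\|\nabla v\|_{L^p(\wto_\e)} \leq C_1\bigl(\|v\|_{L^p(B_2\setminus T)} + \|g\|_{L^p(\wto_\e)}\bigr) + C_2\,\e^{3-3/p}\,\|\nabla v\|_{L^p(\wto_\e)}.
\end{equation*}
The main obstacle is the last term, which contains the very quantity we wish to estimate. Here I would exploit that for $p \in (3/2, 3)$ the exponent $3 - 3/p$ is strictly positive, so one can fix $\e_0 > 0$ depending only on $p$ with $C_2\,\e_0^{3-3/p} \leq 1/2$, allowing the term to be absorbed into the left-hand side for all $\e \in (0, \e_0]$. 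For the remaining values $\e \in [\e_0, 1/4]$, the domain $\wto_\e$ has diameter and Lipschitz character bounded by constants depending only on $\e_0$, so the classical $W^{1,p}$ estimate recalled in Remark \ref{rem:exi} applies directly and yields \eqref{est-vw1} with a constant depending only on $p$ and $\e_0$, hence only on $p$. This completes the proof plan.
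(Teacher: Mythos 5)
Your proposal is correct and follows essentially the same route as the paper: sum \eqref{est-v12}, Proposition \ref{prop-w1} and Proposition \ref{prop-w2}, bound $\|\pi\|_{W^{-1,p}(B_2\setminus T)}$ by $\|v\|_{L^p}+\|g\|_{L^p}$ via the same integration by parts that removes $\nabla v$ (the paper does this through \eqref{est-v11}, you do it by writing $2\nabla v\cdot\nabla\vp=2\dive(v\nabla\vp)-2v\Delta\vp$), control $\|\pi\|_{L^1}$ on the fixed support, and absorb the $C\e^{3-3/p}\|\nabla v\|$ term for small $\e$, treating $\e$ bounded away from $0$ by the classical theory. The only cosmetic difference is that you convert the $L^1$ norm to $L^p$ norms on the bounded set explicitly, which the paper does implicitly in the same step.
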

\begin{proof}[Proof of Proposition \ref{prop:v}]
First of all, we consider the estimates of $\|\pi \|_{W^{-1,p}(B_2\setminus T)}$ and $\|\pi \|_{L^1}$ appearing in Proposition \ref{prop-w2}. Similar as the arguments in Section \ref{sec:v1},  particularly by the estimates in \eqref{est-v11}, we have
\be\label{est-g1}
\|\pi \|_{W^{-1,p}(B_2\setminus T)}\leq C \left(\|v\|_{L^p(B_2\setminus T)}+\|g\|_{L^p(B_2\setminus T)}\right).\nn
\ee
For $\|\pi \|_{L^1}$, direct calculation gives
\ba\label{est-g12}
\|\pi \|_{L^1}\leq C\left(\|v\|_{L^1(B_2\setminus T)}+\|\nabla v\|_{L^1(B_2\setminus T)}+\|g\|_{L^1(B_2\setminus T)}\right).\nn
\ea

Then, using Proposition \ref{prop-w2} implies
\be\label{est-w21}
\|\nabla w_2\|_{L^p(B_{1/\e})} \leq C \left(\|v\|_{L^p(B_2\setminus T)}+\|g\|_{L^p(B_2\setminus T)}\right)+C\e^{3-\frac{3}{p}}\|\nabla v\|_{L^1(B_2\setminus T)}.\nn
\ee
Together with \eqref{w12} and Proposition \ref{prop-w1}, we derive
\be\label{est-vw}
\|\nabla v_2\|_{L^p(B_{1/\e})} \leq C \left(\|v\|_{L^p(B_2\setminus T)}+\|g\|_{L^p(\wto_\e)}\right)+C\e^{3-\frac{3}{p}}\|\nabla v\|_{L^1(B_2\setminus T)}.
\ee
Then by \eqref{v12}, \eqref{est-v12} and \eqref{est-vw}, we obtain
\be\label{est-vw2}
\|\nabla v\|_{L^p(B_{\wto_\e})} \leq C \left(\|v\|_{L^p(B_2\setminus T)}+\|g\|_{L^p(\wto_\e)}\right)+C\e^{3-\frac{3}{p}}\|\nabla v\|_{L^p(B_2\setminus T)}.
\ee
Without loss of generality, we may assume $\e \leq \e_0$ where $C\e_0^{3-\frac{3}{p}}=1/2.$ For the case $\e_0<\e<1$, Theorem \ref{thm} and Theorem \ref{thm2} are rather classical.

Then for $\e\leq \e_0,$ the term $C\e^{3-\frac{3}{p}}\|\nabla v\|_{L^p(B(_2\setminus T)}$ appearing on the right-hand side of \eqref{est-vw2} can be absorbed by the left-hand side of \eqref{est-vw2}. We finally obtain \eqref{est-vw1} and complete the proof of Proposition \ref{prop:v}.

\end{proof}

Now we can prove Theorem \ref{thm2} by contradiction. We suppose that Theorem \ref{thm2} does not hold. Then there exist $p\in (3/2,3)$, a sequence $\{\e_k\}_{k\in\N}$ of positive numbers and a sequence $\{g_k\}_{k\in \N}$ of $L^{p}(\wto_{\e_k})$ functions satisfying
\be\label{gk}
\e_k\to 0,\ \mbox{as $k\to \infty$},\quad \|g_k\|_{L^{p}(\wto_{\e_k})}=1\ \mbox{for any $k\in \N$},\nn
\ee
such that the unique solution $v_k\in W_0^{1,p}(\wto_{\e_k})$ to the Dirichlet problem
 \ba\label{vk}
-\Delta v_k&=\dive g_k,\quad &&\mbox{in}~\widetilde\O_{\e_k},\\
v_k&=0,\quad &&\mbox{on} ~\d\wto_{\e_k}\nn
\ea
satisfies
\ba\label{gk1}
\|\nabla v_k\|_{L^{p}(\wto_{\e_k})} \to +\infty,\ \mbox{as $k\to \infty$}.\nn
\ea

Then the couple $(\tilde v_k,\tilde g_k)$ defined by
$$
\tilde v_k:=\frac{v_k}{\|\nabla v_k\|_{L^{p}(\wto_{\e_k})}},\quad \tilde g_k:=\frac{g_k}{\|\nabla v_k\|_{L^{p}(\wto_{\e_k})}}
$$
satisfies
\ba\label{t-gk}
\|\nabla \tilde v_k\|_{L^{p}(\wto_{\e_k})} =1 \ \mbox{for any  $k\in \N$},\quad \|\tilde g_k\|_{L^{p}(\wto_{\e_k})} \to 0 \ \mbox{as $k\to \infty$}
\ea
and
 \ba\label{eq-vk}
-\Delta \tilde v_k&=\dive \tilde g_k,\quad &&\mbox{in}~\widetilde\O_{\e_k},\\
\tilde v_k&=0,\quad &&\mbox{on} ~\d\wto_{\e_k}.
\ea
By Proposition \ref{prop:v}, the couple $(\tilde v_k,\tilde g_k)$ enjoys the stimate
\be\label{t-vg}
\|\nabla \tilde v_k\|_{L^p(\wto_{\e_k})} \leq C \left(\|\tilde v_k\|_{L^p(B_2\setminus T)}+\|\tilde g_k\|_{L^p(\wto_{\e_k})}\right).
\ee
By the uniform estimate in \eqref{t-gk} and Sobolev embedding, we have
\be\label{norm-pstar}
\sup_{k\in\N}\| \tilde v_k\|_{L^{p^*}(\wto_{\e_k})}\leq C, \quad \frac{1}{p^*}=\frac{1}{p}-\frac{1}{3}.
\ee

For any $k\in \N$, we define the zero extension of $\tilde v_k$:
\be\label{t-wk}
\tilde w_k=\tilde v_k \ \mbox{in}\ \wto_{\e_k},\quad \tilde w_k=0\  \mbox{in} \ \R^3\setminus  \wto_{\e_k}.
\ee
 Since $\tilde v_k \in W_0^{1,p}(\wto_{\e_k})$,  we have
 \be\label{t-wk2}
 \nabla \tilde w_k=\nabla \tilde v_k \ \mbox{in}\ \wto_{\e_k},\quad \nabla \tilde w_k=0\  \mbox{in} \ \R^3\setminus  \wto_{\e_k}.
 \ee

By the estimates in \eqref{t-gk} and \eqref{norm-pstar}, we have the uniform estimates for the extensions
$$
\|\nabla \tilde w_k\|_{L^p(\R^3\setminus T)}=1,\quad \sup_{k\in\N}\| \tilde w_k\|_{L^{p^*}(\R^3\setminus T)}\leq C.
$$
We then have the weak convergence
\be\label{st-con2}
\tilde w_k\to \tilde w_\infty \ \mbox{weakly in}\ L^{p^*}(\R^3\setminus T),\quad \nabla\tilde w_k\to \nabla \tilde w_\infty \  \mbox{weakly in}\ L^p(\R^3\setminus T).
\ee

Moreover, passing $k\to 0$ in the weak formulation of \eqref{eq-vk} implies that for any $\phi\in C_c^\infty(\R^3\setminus T)$, we have
$$
\int_{\R^3\setminus T} \nabla \tilde w_\infty \cdot \nabla \phi \, dx =0.
$$
This means the limit $\tilde w_\infty$ is a harmonic function in exterior domain $\R^3\setminus T$:
 \be\label{eq-w0}
-\Delta \tilde w_\infty=0,\quad  \mbox{in}~ \R^3\setminus T.
\ee
Then $\tilde w_\infty$ is smooth in $\R^3\setminus T$. Moreover, since $\tilde w_k$ have zero trace on $\d T$, we have
\be\label{eq-w01}
\tilde w_\infty=0 \ \mbox{on}\ \d T.
\ee
Since $\tilde w_\infty \in\ L^{p^*}(\R^3\setminus T)$, we have
\be\label{eq-w02}
\lim_{|x|\to \infty}\tilde w_\infty(x)=0.
\ee

By the maximal principle of harmonic functions, we derive
$$
\tilde w_\infty=0 \quad \mbox{in}\ \R^3\setminus T.
$$

\medskip

On the other hand,  the fact that $B_2\setminus T$ is a bounded Lipchitz subdomain of $\wto_{\e_k}$ implies
\be\label{loc-sob}
\sup_{k\in\N}\|\tilde v_k\|_{W^{1,p}(B_2\setminus T)} \leq C.
\ee
By virtute of the Rellich-Kondrachov compact embedding theorem, up to a substraction of subsequence, we have
\be\label{st-con1}
\tilde w_k = \tilde v_k\to \tilde w_\infty=0 \quad  \mbox{strongly in}\quad L^q(B_2\setminus T)\quad \mbox{for any $1\leq q<p^*$}.
\ee

Hence, passing $k\to \infty$ in \eqref{t-vg} implies the following contradiction:
$$
1\leq 0.
$$

 This implies that Theorem \ref{thm2} is true and we complete the proof.

\section{Proof of Theorem \ref{thm11}}

In this section, we prove Theorem \ref{thm11} by contradiction. Let $f\in L^p(B_1;\R^3),\, p>3$ be as in Theorem \ref{thm11} and $u_\e\in W_0^{1,p}(\O_\e)$ be a solution to \eqref{1} for any $0<\e\ll 1$. By contradiction we suppose that
\be\label{est-con}
\liminf_{\e\to 0}\|\nabla u_\e\|_{L^p(\Omega_\e)} <\infty.
\ee
Then there exists a subsequence $\{\e_k\}_{k\in \N}$ such that $\e_k\to 0$ as $\k\to \infty$ and
\be\label{est-con1}
\sup_{k\in \N}\|\nabla u_{\e_k}\|_{L^p(\Omega_{\e_k})} <\infty.
\ee

We consider the zero extension of $u_{\e_k}$:
$$
\tilde u_{\e_k} =u_{\e_k} \ \mbox{in}\  \O_{\e_k},\quad \tilde u_{\e_k}=0 \ \mbox{on}\ {\e_k} T.
$$
Then $\tilde u_{\e_k} \in W_0^{1,p}(B_1)$ and
$$
\nabla\tilde u_{\e_k} =\nabla u_{\e_k}\ \mbox{in}\  \O_{\e_k},\quad \nabla \tilde u_{\e_k}=0 \ \mbox{on}\ {\e_k} T.
$$
Therefore by \eqref{est-con1}, we have
\be\label{est-uek}
\sup_{k\in \N} \|\tilde u_{\e_k}\|_{W_0^{1,p}(B_1)}\leq C\, \sup_{k\in \N}\|\nabla \tilde u_{\e_k}\|_{L^p(B_1)}=\sup_{k\in \N}\|\nabla u_{\e_k}\|_{L^p(\Omega_{\e_k})} <\infty.
\ee
Up to a substraction of subsequence,
\be\label{wl-tu}
\tilde u_{\e_k} \to \tilde u \ \mbox{weakly in}\  W_0^{1,p}(B_1), \quad \mbox{as} \ k\to \infty.
\ee

\medskip

We firstly claim:
\begin{proposition}\label{prop:tu0}
The weak limit $\tilde u\in C^{0,1-\frac{3}{p}}(B_1)$ and $\tilde u(0)=0$.
\end{proposition}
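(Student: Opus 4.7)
The plan is to exploit that $p > 3$ so that Morrey's embedding gives control in a Hölder space, then transfer the vanishing of $\tilde u_{\e_k}$ on the (shrinking) holes to vanishing of $\tilde u$ at the origin by uniform convergence.

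\textbf{Step 1: Hölder regularity of $\tilde u$.} By Morrey's embedding, for $p>3$ the continuous inclusion
\[
W_0^{1,p}(B_1)\hookrightarrow C^{0,1-\frac{3}{p}}(\overline{B_1})
\]
holds, with embedding constant depending only on $p$. Combined with the uniform bound \eqref{est-uek}, this yields
\[
\sup_{k\in\N}\,\|\tilde u_{\e_k}\|_{C^{0,1-\frac{3}{p}}(\overline{B_1})}\leq C.
\]
The embedding into $C(\overline{B_1})$ is moreover compact (Rellich--Kondrachov combined with Arzel\`a--Ascoli), so along a further subsequence (not relabeled) $\tilde u_{\e_k}$ converges uniformly on $\overline{B_1}$; the uniform limit must coincide with the weak limit $\tilde u$ in \eqref{wl-tu}. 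Since the Hölder seminorm is lower semicontinuous under uniform convergence, $\tilde u\in C^{0,1-3/p}(\overline{B_1})$.

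\textbf{Step 2: Vanishing at the origin.} Fix any point $y_0\in T$ (such a point exists since $T$ is a nonempty closed Lipschitz subdomain of $B_1$). Then $\e_k y_0\in \e_k T$, so by the zero extension
\[
\tilde u_{\e_k}(\e_k y_0)=0\quad \text{for every } k\in\N.
\]
As $\e_k\to 0$, the points $x_k:=\e_k y_0$ converge to $0$. Using the triangle inequality together with the uniform convergence $\tilde u_{\e_k}\to \tilde u$ on $\overline{B_1}$ and the continuity of $\tilde u$ (from Step 1),
\[
|\tilde u(0)|\leq |\tilde u(0)-\tilde u(x_k)|+\|\tilde u-\tilde u_{\e_k}\|_{L^\infty(\overline{B_1})}+|\tilde u_{\e_k}(x_k)|\xrightarrow[k\to\infty]{}0,
\]
which gives $\tilde u(0)=0$.

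\textbf{Main obstacle.} Both steps are standard once one notices that $p>3$ activates Morrey's embedding, so no genuine difficulty arises here; the only mild subtlety is ensuring that the subsequence extracted from the compact embedding into $C(\overline{B_1})$ is compatible with the weak $W^{1,p}$ subsequence already chosen in \eqref{wl-tu}, which is automatic by uniqueness of limits under extraction of a common further subsequence.
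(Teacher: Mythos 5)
Your proposal is correct and follows essentially the same route as the paper: Morrey/compact Sobolev embedding for $p>3$ to get Hölder regularity and uniform convergence of $\tilde u_{\e_k}$ to $\tilde u$, then the vanishing of $\tilde u_{\e_k}$ on the shrinking set $\e_k T$ to conclude $\tilde u(0)=0$. Your Step 2 is in fact slightly more careful than the paper's one-line argument (which writes $\e_k T\ni 0$), since you only use that some point $\e_k y_0$ of the hole tends to the origin rather than assuming $0\in T$.
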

\begin{proof}[Proof of Proposition \ref{prop:tu0}] By \eqref{wl-tu} and the fact $p>3$, Sobolev embedding and compact Sobolev embedding implies up to a substraction of subsequence that
\be\label{st-tu}
\tilde u\in C^{0,1-\frac{3}{p}}(B_1),\quad \tilde u_{\e_k} \to \tilde u \ \mbox{strongly in}\  C^{0,\l}( B_1), \quad \mbox{as} \ k\to \infty,
\ee
for any $\l<1-3/p$.

Since $\tilde u_{\e_k}=0$ on $\e_k T\ni 0$, the strong convergence in \eqref{st-tu} implies that $\tilde u(0)=0$.

\end{proof}

We secondly claim:
\begin{proposition}\label{prop:tu} The weak limit $\tilde u$ in \eqref{wl-tu} solves the Dirichlet problem of the Laplace equation in the unit ball:
\ba\label{eq-tu}
-\Delta \tilde u&=\dive f,\quad &&\mbox{in}~B_1,\\
\tilde u&=0,\quad &&\mbox{on} ~\d B_1.
\ea

\end{proposition}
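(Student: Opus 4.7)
The plan is to pass to the limit in the weak formulation of \eqref{1}. The boundary condition $\tilde u = 0$ on $\partial B_1$ is automatic: by construction $\tilde u_{\e_k} \in W_0^{1,p}(B_1)$, and since $W_0^{1,p}(B_1)$ is weakly closed, the weak limit $\tilde u$ lies in $W_0^{1,p}(B_1)$. It thus remains to verify the PDE in the sense of distributions, namely
$$\int_{B_1} \nabla \tilde u \cdot \nabla \phi\, dx = -\int_{B_1} f \cdot \nabla \phi\, dx, \qquad \forall \phi \in C_c^\infty(B_1).$$

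First I would treat test functions $\phi \in C_c^\infty(B_1 \setminus \{0\})$. Since $\e_k T \subset B(0, \e_k\,\mathrm{diam}(T))$ shrinks to $\{0\}$ as $k \to \infty$, such a $\phi$ lies in $C_c^\infty(\O_{\e_k})$ for all $k$ large enough. The weak formulation of \eqref{1} then gives
$$\int_{B_1} \nabla \tilde u_{\e_k} \cdot \nabla \phi\, dx = \int_{\O_{\e_k}} \nabla u_{\e_k} \cdot \nabla \phi\, dx = -\int_{\O_{\e_k}} f \cdot \nabla \phi\, dx = -\int_{B_1} f \cdot \nabla \phi\, dx,$$
where the last equality uses that $\phi$ vanishes on $\e_k T$. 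The weak convergence of $\nabla \tilde u_{\e_k}$ to $\nabla \tilde u$ in $L^p(B_1)$, tested against $\nabla \phi \in L^{p'}(B_1)$, lets one pass to the limit on the left and yields the identity for every $\phi \in C_c^\infty(B_1 \setminus \{0\})$.

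The second step extends this to arbitrary $\phi \in C_c^\infty(B_1)$ by a capacity/approximation argument. Since $p > 3$, its Lebesgue conjugate satisfies $p' < 3/2 < 3$. I would pick a radial cut-off $\eta_n \in C^\infty(\R^3)$ with $\eta_n \equiv 0$ on $B(0, 1/(2n))$, $\eta_n \equiv 1$ on $\R^3 \setminus B(0, 1/n)$, and $|\nabla \eta_n| \leq Cn$; then
$$\|\nabla \eta_n\|_{L^{p'}(B_1)}^{p'} \leq C n^{p'}\,|B(0, 1/n)| \leq C n^{p' - 3} \longrightarrow 0.$$
Consequently $\phi_n := \phi \eta_n \in C_c^\infty(B_1 \setminus \{0\})$ converges to $\phi$ in $W^{1,p'}(B_1)$ (dominated convergence handles the term $\eta_n \nabla \phi$, and the bound above handles $\phi \nabla \eta_n$). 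Applying the identity from the first step to $\phi_n$ and using $\nabla \tilde u, f \in L^p(B_1)$ to pass $n \to \infty$ yields the identity for $\phi$.

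The main technical point is the approximation step: excising a small neighborhood of the origin from a test function must produce only a small $W^{1,p'}$ error, which rests on a single point having zero $W^{1,p'}$-capacity in $\R^3$; this requires $p > 3/2$ and is automatic under the hypothesis $p > 3$. Beyond this, the argument uses only the weak convergence recorded in \eqref{wl-tu} and is otherwise routine.
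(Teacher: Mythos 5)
Your proposal is correct and follows essentially the same route as the paper: first pass to the limit in the weak formulation for test functions supported away from the origin, then remove the puncture with cut-offs $\eta_n$ vanishing near $0$, using that $\|\nabla\eta_n\|_{L^{p'}}\to 0$ because $p'<3$ (the paper phrases this as estimating the error terms $n^{1-3/p'}$ and $n^{-3/p'}$ rather than as $W^{1,p'}$-approximation of the test function, but it is the same zero-capacity argument). Your observation that only $p>3/2$ is needed for this step is accurate; the hypothesis $p>3$ in the theorem makes it automatic.
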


\begin{proof}[Proof of Proposition \ref{prop:tu}] To show \eqref{eq-tu}, it is sufficient to prove
\be\label{wk-fm-tu}
\int_{B_1}\nabla\tilde u \cdot \nabla \vp \,dx =-\int_{B_1}f \cdot \nabla \vp \,dx \quad \mbox{for any $\vp\in C_c^\infty(B_1)$}.
\ee
Since $u_{\e_k}$ is a solution to \eqref{1}, the zero extension $\tilde u_{\e_k}$ satisfies
\be\label{wk-fm-tue}
\int_{B_1}\nabla\tilde u_{\e_k} \cdot \nabla \phi \,dx =-\int_{B_1}f \cdot \nabla \phi \,dx \quad \mbox{for any $\phi \in C_c^\infty(B_1\setminus {\e_k} T)$}.
\ee
Letting $k\to \infty $ in \eqref{wk-fm-tue} gives
\be\label{wk-fm-tu1}
\int_{B_1}\nabla\tilde u \cdot \nabla \psi \,dx =-\int_{B_1}f \cdot \nabla \psi \,dx \quad \mbox{for any $\psi \in C_c^\infty(B_1\setminus \{0\})$}.
\ee
We introduce a sequence of cut-off functions $\phi_n\in C^\infty(\R^3),\ n\in \Z_+$ satisfying
\be\label{cut-off2}
0\leq \phi_n\leq 1,\quad \phi_n =0 \ \mbox{in}\ B_{1/n}, \quad \phi_n =1 \ \mbox{on}\ \{x:|x|\geq 2/n\},\quad |\nabla\phi_n|\leq 2n.
\ee
Then for any $1\leq q\leq \infty$, we have the estimates
\be\label{cut-off20}
\|(1-\phi_n)\|_{L^q(\R^3)} \leq C \, n^{-\frac{3}{q}},\quad \|\nabla \phi_n\|_{L^q(\R^3)} \leq C\, n^{1-\frac{3}{q}}
\ee

\medskip

For any $\vp\in C_c^\infty(B_1)$, there holds
\ba\label{wk-fm-tu2}
&\int_{B_1}(\nabla\tilde u+f) \cdot \nabla \vp \,dx=\int_{B_1}(\nabla\tilde u +f)\cdot \nabla (\vp\phi_n) \,dx \\
&\quad - \int_{B_1} (\nabla\tilde u +f)\cdot \vp\nabla \phi_n \,dx+\int_{B_1} (\nabla\tilde u +f)\cdot (1-\phi_n)\nabla \vp \,dx\\
&=- \int_{B_1} (\nabla\tilde u +f)\cdot \vp\nabla \phi_n \,dx+\int_{B_1} (\nabla\tilde u +f)\cdot (1-\phi_n)\nabla \vp \,dx,
\ea
for which we used \eqref{wk-fm-tu1} in the second equality.

By \eqref{cut-off2} and \eqref{cut-off20}, we have
\ba\label{wk-fm-tu3}
&\left|\int_{B_1} (\nabla\tilde u +f)\cdot \vp\nabla \phi_n \,dx\right|\leq \|\nabla\tilde u +f\|_{L^p}\,\|\nabla \phi_n\|_{L^{p'}}\,\|\vp\|_{L^\infty} \leq C \, n^{1-\frac{3}{p'}},\\
&\left|\int_{B_1} (\nabla\tilde u +f)\cdot (1-\phi_n)\nabla \vp \,dx\right|\leq \|\nabla\tilde u +f\|_{L^p}\,\|(1- \phi_n)\|_{L^{p'}}\,\|\nabla \vp\|_{L^\infty} \leq C \,n^{-\frac{3}{p'}}.
\ea
The Lebegue norms in \eqref{wk-fm-tu3} are taken in $B_1$.  The choice $p>3$ implies $p'<3/2$ and furthermore $1-3/p'<-1$. This implies the quantities in \eqref{wk-fm-tu3} go to zero as $n\to \infty$. Thus passing $n\to \infty$ in \eqref{wk-fm-tu2} implies our desired result \eqref{wk-fm-tu}. We complete the proof of Proposition \ref{prop:tu}.

\end{proof}

Now we are ready to derive a contradiction. We recall the Green's function of the Laplace equation in the unit ball:
$$
G(x,y)=\Phi(x-y)-\Phi\left(|x|\left(\frac{x}{|x|^2}-y\right)\right),
$$
where $\Phi(x)=\a/|x|$ is the fundamental solution of the Laplace operator in $\R^3$. Then by Proposition \ref{prop:tu}, we have the expression
\be\label{ex-tu}
\tilde u(x)= \int_{B_1} G(x,y)\,\dive f(y)\,dy=\a\int_{B_1} \left(\frac{1}{|x-y|}-\frac{1}{\left|\frac{x}{|x|}-|x|y\right|}\right)\dive f(y)\,dy.\nn
\ee
This gives
\be\label{ex-tu0}
\tilde u(0)=\a\int_{B_1} \left(\frac{1}{|y|}-1\right)\dive f(y)\,dy,\nn
\ee
which is well defined due to our assumption that $\dive f \in L^q(B_1)$ for some $q>3/2$.

Applying Proposition \ref{prop:tu0} implies
\be\label{ex-tu1}
\int_{B_1} \left(\frac{1}{|y|}-1\right)\dive f(y)\,dy=0,\nn
\ee
which contradicts to \eqref{f-ex}. This means the assumption \eqref{est-con} is not true. We thus obtain \eqref{est12} and complete the proof of Theorem \ref{thm11}.

%%%%%%%%%%%%%%%%%%%%%%%%%%%%%%%%%%%%%%%%%%%%%%%%%%%%%%%%%%%%%%%%%%%%%%%%%%%%%%%%%%%%%%%%%

\section{Proof of Theorem \ref{thm1}}

To prove the first part of Theorem \ref{thm1}, it is sufficient to take $f(x)=(x_1,0,0)$ and to apply Theorem \ref{thm11}. Indeed, such $f(x)$ satisfies the assumptions in Theorem \ref{thm1} and Theorem \ref{thm11}; in particular,
\be\label{ex-tu10}
\int_{B_1} \left(\frac{1}{|y|}-1\right)\dive f(y)\,dy=\int_{B_1} \left(\frac{1}{|y|}-1\right)\,dy\neq 0.\nn
\ee

\medskip

Now we prove the second part of Theorem \ref{thm1} by duality arguments. Let $1< p<3/2$ and $f(x)=(x_1,0,0)\in C^\infty(\overline B_1;\R^3)$ fulfills the assumptions in Theorem \ref{thm11}. Since $T$ has $C^1$ boundary, then for any $0<\e<1$ there exists a unique solution $v_\e\in W_0^{1,p'}(\O_\e)$ to Dirichlet problem \eqref{1}. Since $3<p'<\infty$, by Theorem \ref{thm11}, we have
\be\label{est-ve}
\liminf_{\e\to 0}\|\nabla v_\e\|_{L^{p'}(\Omega_\e)}=\infty.
\ee
We will show that $f_\e$ defined below fulfills our request:
\be\label{def-fe}
f_\e:=\frac{|\nabla v_\e|^{p'-2}\nabla v_\e}{\|\nabla v_\e\|_{L^{p'}(\O_\e)}^{\frac{p'}{p}}}.\nn
\ee
Direct calculations gives
$$
\|f_\e\|_{L^p(\O_\e)}=1.
$$
Since the domain $T$ is $C^1$,  for any $0<\e<1$, there exists a unique solution $u_\e\in W_0^{1,p}(\O_\e)$ to Dirichlet problem \eqref{1} with source function $f_\e$. We have
\ba\label{est-ve1}
&\|\nabla u_\e\|_{L^{p}(\Omega_\e)}=\sup_{\|\phi\|_{L^{p'}(\Omega_\e)=1}}|\langle \nabla u_\e,\phi\rangle|\geq \|f\|_{L^{p'}(\Omega_\e)}^{-1}|\langle \nabla u_\e,f\rangle|\\
&=\|f\|_{L^{p'}(\Omega_\e)}^{-1}|\langle  \nabla u_\e, \nabla v_\e \rangle|=\|f\|_{L^{p'}(\Omega_\e)}^{-1}|\langle  f_\e, \nabla v_\e \rangle|=\|f\|_{L^{p'}(\Omega_\e)}^{-1} \|\nabla v_\e\|_{L^{p'}(\O_\e)}.
\ea
In \eqref{est-ve1} we used the fact that $v_\e$ and $u_\e$ satisfy Dirichlet problem \eqref{1} with right-hand side $\dive f$ and $\dive f_\e$ respectively. The estimate \eqref{est-ve} implies
\be\label{est-ue}
\liminf_{\e\to 0}\|\nabla u_\e\|_{L^{p}(\Omega_\e)}=\infty.\nn
\ee
This is exactly \eqref{est12}. We complete the proof of Theorem \ref{thm1}.

\section{Conclusions and perspectives}

In this paper, we gave a quite complete study for the uniformness of the $W^{1,p}$ estimates for the Dirichlet problem of the Laplace equation in the domain $\Omega_\e:=B_1\setminus \e T\subset \R^d$. Under certain assumptions on the regularity of $T$ (Lipchitz in three dimensions and $C^1$ in higher dimensions), we showed that for $d'<p<d$, there hold uniform $W^{1,p}$ estimates as $\e\to 0$; for any $d<p<\infty$, no matter how smooth the hole $T$ is, there exist smooth source functions $f\in C^\infty(\overline B_1;\R^d)$ such that the $W^{1,p}$ norms of the corresponding  solutions go to infinity as $\e$ goes to zero; finally for $1<p<d'$, there exit source functions $f_\e$ satisfying $\|f_\e\|_{L^p(\Omega_\e)}=1$ for any $0<\e<1$ such that the $W^{1,p}$ norms of the corresponding solutions go to infinity as $\e$ goes to zero.

However, the results here do not cover the case $p=d$ or $p=d'$ due to some technical difficulties. Particularly, in the proof of Lemma \eqref{lem:lap-Be}, we need to assume $p<d$ such that $|x|^{-p}$ is integrable in $B_{1/\e}$  (see \eqref{est-m21} - \eqref{est-m23}), and also in the proof of Proposition \ref{prop:v} we need to assume $p>d'$ to make sure that the quantity in \eqref{est-Phi7} is finite. Hence, the conclusion for the case $p=d$ or $p=d'$ is unclear.

%%%%%%%%%%%%%%%%%%%%%%%%%%%%%%%%%%%%%%%%%%%%%%%%%%%%%%%%%%%%%%%%%%%%%%%%%%%%%%%%%%%%%%%%%%%%%%%%%%%%%%%%%

{

\end{document}
\begin{thebibliography}{000}


\bibitem{ALL-NS1} G. Allaire. \newblock Homogenization of the Navier-Stokes equations in open sets perforated with tiny holes. I. Abstract framework, a volume
distribution of holes. \newblock {\em Arch. Ration. Mech. Anal.}, {\bf 113}(3) (1990), 209-259.


\bibitem{ALL-NS2} G. Allaire. \newblock Homogenization of the Navier-Stokes equations in open sets perforated with tiny holes. II. Noncritical sizes of the holes for a volume distribution and a surface distribution of holes. \newblock {\em Arch. Ration. Mech. Anal.}, {\bf 113}(3) (1990), 261-298.


\bibitem{BS}R.M. Brown, Z. Shen. \newblock Estimates for the Stokes operator in Lipschitz domains.
\newblock {\em Indiana Univ. Math. J.}, {\bf 44}(4) (1995), 1183-1206.

\bibitem{CP}L.A. Caffarelli, I. Peral. \newblock On $W^{1,p}$ estimates for elliptic equations in divergence form.
\newblock {\em Comm. Pure Appl. Math.}, {\bf 51}(1) (1998), 1-21.

\bibitem{FL1}E. Feireisl, Y. Lu. \emph{Homogenization of stationary Navier-Stokes equations in domains with tiny holes.} {\em J. Math. Fluid Mech.}, {\bf 17} (2015), 381-392.

\bibitem{FNT-Hom} E. Feireisl, A. Novotn\'y and T. Takahashi. \newblock Homogenization and singular limits for the complete Navier-Stokes-Fourier system.
\newblock {\em J. Math. Pures Appl.},  {\bf 94}(1) (2010), 33-57.


\bibitem{JK} D. Jerison, C. Kenig.  \newblock The inhomogeneous Dirichlet problem in Lipschitz domains.
\newblock {\em J. Funct. Anal.},  {\bf 130}(1) (1995), 161-219.

\bibitem{JM} W. J\"ager, A. Mikeli\'c. \newblock Homogenization of the Laplace equation in a partially perforated domain.
\newblock {\em In memory of Serguei Kozlov, volume 50 of Advances in Mathematics for Applied Sciences,} (1999), 259-284.

\bibitem{KS}H. Kozono, H. Sohr. \newblock New a priori estimates for the Stokes equations in exterior domains.
 \newblock {\em Indiana Univ. Math. J.}, {\bf 40}(1) (1991), 1-27.

\bibitem{Mas-Hom} N. Masmoudi. \newblock Homogenization of the compressible Navier-Stokes equations in a porous medium.
\newblock {\em ESAIM Control Optim. Calc. Var.}, {\bf 8} (2002), 885-906.

\bibitem{Mik} A. Mikeli\'{c}. \newblock {Homogenization of nonstationary Navier-Stokes equations in a domain with a grained boundary}.
  \newblock {\em Ann. Mat. Pura Appl.}, {\bf 158} (1991), 167-179.



 \end{thebibliography}
